\let\theoremstyle\nothing
\newtheorem{theorem}{Theorem}[section]
\newtheorem{lemma}[theorem]{Lemma}
\newtheorem{corollary}[theorem]{Corollary}
\theoremstyle{definition}
\newtheorem{definition}[theorem]{Definition}
\newtheorem{proposition}[theorem]{Proposition}
\theoremstyle{remark}
\newtheorem{remark}[theorem]{Remark}
\newcommand\R{\mathbb R}   
\newcommand\Po{\mathcal P}   
\newcommand\Co{\mathcal C}   
\newcommand\Lo{\mathcal L}
\newcommand\No{\mathcal N}
\newcommand\Mo{\mathcal M}
\DeclareMathOperator{\conv}{conv}   
\def\R{\mathbb{R}}
\def\<{\left<}
\def\>{\right>}
\def\A{\mathcal{A}}
\def\B{{\mathcal{B}}}
\def\H{\Delta}
\def\G{\Gamma}
\def\des{\mathrm{des}}
\def\I{{\mathcal{I}}}
\newcommand{\set}[1]{ \left\{ #1 \right\} }
\def\mm{{\mathcal M}}
\def\A{{\mathcal A}}
\def\P{{\mathcal P}}
\def\A{{\mathcal Aff}}
\def\conv{{\mathrm{conv}}}
\renewcommand{\conv}{\mathrm{conv}}
\renewcommand{\<}{\left\langle}
\renewcommand{\set}[1]{\left\lbrace #1 \right\rbrace} 
\renewcommand{\>}{\right\rangle}
\newcommand{\inD}[1][\relax]{\def\argone{#1}\def\temprelax{\relax}
  \ifx\argone\temprelax\right.\else\,\middle|#1\right.{}\fi}
\title{Lattice Path Matroid Polytopes}
\begin{document}
\date{}
\maketitle
\begin{center}
\author{Hoda Bidkhori\\

\small  Massachusetts Institute of Technology, Cambridge, MA, 02139,\\[-0.8ex]
\small  \texttt{bidkhori@mit.edu}}
\end{center}
\and
\begin{center}
\end{center}    
\begin{abstract}

Fix two lattice paths  $P$ and $Q$  
from $(0,0)$ to  $(m,r)$ that use East and North steps with $P $ never going above $Q$. Bonin et al. in~\cite{B1} show that the lattice paths that go from $(0,0)$ to $(m,r)$ and remain bounded by $P$ and $Q$ can be identified with the bases of a particular type of transversal matroid, which we call it a lattice path matroid.

In this paper, we consider properties of  lattice path matroid polytopes. These are the polytopes associated to the lattice path matroids. We investigate their face structure, decomposition, triangulation,  Ehrhart polynomial and volume.

\noindent {Keywords}: 
\end{abstract}


\section{Introduction}

In this paper we discuss  a special class of matroid polytopes which we call the Lattice path matroid polytopes. With every pair of lattice paths P and Q that have  a common
endpoints we associate a matroid in such a way that the bases of the matroid
correspond to the paths that remain in the region bounded by $P$ and $Q$. These matroids,
which we call lattice path matroids, appear to have a wealth of interesting  and striking properties.

For any matroid one can associate a matroid polytope by taking the convex hull
of the incidence vectors of the bases of the matroid. The last few years has seen
a flurry of research activities around matroid polytopes, in part because their
combinatorial properties provide key insights into matroids and in part because
they form an intriguing and seemingly fundamental class of polytopes which
exhibit interesting geometric features. The theory of matroid polytopes has
gained prominence due to its applications in algebraic geometry, combinatorial
optimization, Coxeter group theory, and most recently, tropical geometry. In
general matroid polytopes are not well understood.

In this paper we investigate properties of the lattice path matroid polytopes which are the polytopes associated to the lattice path matroids. This class of matroid polytopes have many interesting properties and they are belong to  important class of polytopes such as Alcoved Polytopes, Generalized Permutahedron, Polypositroids ~\cite{AP}, ~\cite{AP2}. This Polytope is also closely related to Stanley-Pitman Polytopes discussed by Stanley ~\cite{PiSt}.

 The combinatorial and structural properties of  the Lattice Path Matroids are studies by Bonin et. al. in 
 ~\cite{B1} and ~\cite{B2}. In this paper we discover the face structure, decomposition, triangulation,  Ehrhart polynomial and volume of the lattice path matroid polytopes.
\section{Definitions and Background} \label{background}
A \emph{Matroid} $\Mo$ is a finite collection of subsets
$\mathcal{F}$ of $[n] = \{1,2,\ldots,n\}$ called \emph{independent
sets} such that the following properties are satisfied:
\begin{enumerate}
\item $\emptyset \in \mathcal{F}$
\item  If $U \in
\mathcal{F}$ and $V \subseteq U$ then $V \in \mathcal{F}$ 
\item If $U, V \in \mathcal{F}$ and $|U| = |V| + 1$ there exists $x \in U
\setminus V$ such that $V \cup x \in \mathcal{F}$
\end{enumerate}

 \emph{Bases} are defined  to be maximal independent sets of a matroid.  Let $\mathcal{B}$
be the set of bases of a matroid $\Mo$.
 If $B =
\{\sigma_1,\ldots,\sigma_r\} \in \mathcal{B}$, the
\emph{incidence vector of B} is defined as $  e_B := \sum_{i=1}^r 
e_{\sigma_i}$, where $ e_j$ is the standard elementary $j$th
vector in $\mathbb{R}^n$. We define \emph{matroid polytope} of $\Mo$  as
$\Po(\Mo) := \conv \{\,  e_B  \mid B \in \mathcal{B} \, \}$, where
$\conv(\cdot)$ denotes the convex hull.

\vspace{2mm}

The set system $\mathcal{A}=\{A_j: j\in J\}$  is a multiset
of subsets of a finite set~$S$. A \emph{transversal} of $\mathcal{A}$ is a set $\{x_j:j\in J\}$
of $|J|$ distinct elements such that $x_j\in A_j$ for all $j$ in
$J$. A \emph{partial transversal} of $\mathcal{A}$ is a transversal
of a set system of the form $\{A_k:k \in K\}$ with $K$ a subset of $J$.

\

Edmonds and Fulkerson showed the following fundamental result:
\begin{theorem}
The  partial transversals of a set system $\mathcal{A}=\{A_j: j\in
J\}$ are the independent sets of a matroid on $S$.
\end{theorem}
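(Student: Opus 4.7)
The plan is to realize the setting as a bipartite matching problem. Construct the bipartite graph $G$ with vertex parts $S$ and $J$ in which $s \in S$ is adjacent to $j \in J$ precisely when $s \in A_j$. A subset $U \subseteq S$ is then a partial transversal of $\mathcal{A}$ if and only if $U$ is the set of $S$-endpoints of some matching of $G$; the theorem reduces to checking that these sets satisfy the three independent-set axioms.

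Axioms (1) and (2) are straightforward. The empty matching realizes $\emptyset$, and restricting a matching saturating $U$ to the edges whose $S$-endpoint lies in a chosen subset $V \subseteq U$ yields a matching saturating $V$.

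The essential step, and the one I expect to be the only real obstacle, is the exchange axiom (3). Given partial transversals $U$ and $V$ with $|U| = |V|+1$, pick matchings $M_U$ and $M_V$ saturating $U$ and $V$ respectively, and form the symmetric difference $H = M_U \triangle M_V$. Every vertex has degree at most two in $H$, so its components are paths and even cycles of $G$; along any cycle or even-length path the counts of $M_U$- and $M_V$-edges agree, while along any odd-length path they differ by exactly one. Since $|M_U|-|M_V|=|U|-|V|>0$, at least one component $P$ must be an odd path whose number of $M_U$-edges exceeds its number of $M_V$-edges by one. By bipartiteness the endpoints of $P$ lie on opposite sides, so one of them, say $s$, lies in $S$; by construction $s$ is $M_U$-saturated but $M_V$-unsaturated, hence $s \in U \setminus V$. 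Finally, $M_V \triangle P$ is again a matching of $G$, and a direct check shows that it saturates exactly the set $V \cup \{s\}$ on the $S$-side, so $V \cup \{s\}$ is a partial transversal and the exchange axiom holds.
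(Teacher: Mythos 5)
Your proof is correct: the reduction to bipartite matchings, the verification of the first two axioms, and the symmetric-difference/alternating-path argument for the exchange axiom are all sound, and the key observations (an odd component with an $M_U$-surplus exists, its $S$-endpoint lies in $U\setminus V$ and is $M_V$-unsaturated, and $M_V\triangle P$ saturates exactly $V\cup\{s\}$) are exactly what is needed. Note, however, that the paper itself gives no proof of this statement --- it is quoted as a classical result of Edmonds and Fulkerson --- so there is no in-paper argument to compare against; what you have written is the standard augmenting-path proof of that classical theorem.
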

\
A \emph{transversal matroid} is a matroid whose independent sets are
the partial transversals of some set system $\mathcal{A}=\{A_j: j\in
J\}$; we say that $\mathcal{A}$ is a \emph{presentation} of the
transversal matroid. The bases of a transversal matroid are the
maximal partial transversals of $\mathcal{A}$

This paper  studies the polytopes which arise from lattice
paths. We consider two kinds of lattice paths,
both of which are in the plane. The lattice paths we
consider use steps $E=(1,0)$ and $N=(0,1)$. We will often treat lattice paths as words in the alphabets $\{E,N\}$, and the notation $\alpha^n$ 
denotes the concatenation of $n$ copies
of $\alpha$, where $\alpha$ is a letter or string of letters.

\vspace{5mm}




The lattice path matroids first defined by Bonin et al.~\cite{B1} as follows:

\begin{definition}\label{def:lpm}
Let $P=p_1p_2\cdots p_{m+r}$ and $Q=q_1q_2\cdots q_{m+r}$ be two
lattice paths from $(0,0)$ to $(m,r)$ with $P$ never going above
$Q$. Let $\{p_{u_1},\ldots,p_{u_r}\}$ be the set of North steps of
$P$ with $u_1,u_2,\ldots,u_r$; similarly, let
$\{q_{l_1},\ldots,q_{l_r}\}$ be the set of North steps of $Q$ with
$l_1,l_2,\ldots,l_r$. Let $N_i$ be the interval $[l_i,u_i]$ of
integers. Let $\Mo[P,Q]$ be the transversal matroid that has ground
set $[m+r]$ and presentation $(N_i: i\in [r])$; the pair $(P,Q)$ is
a \emph{presentation of $\Mo[P,Q]$}. A \emph{lattice path matroid} is
a matroid  that is isomorphic to $\Mo[P,Q]$ for some such pair of
lattice paths $P$ and $Q$.
\end{definition}


The fundamental connection between the transversal matroid $\Mo[P,Q]$
and the lattice paths that stay in the region bounded by $P$ and $Q$
is the following theorem of Bonin et. al.~\cite{B1}  which says that the bases of $\Mo[P,Q]$ can
be identified with such lattice paths.
\
\begin{theorem}[Bonin et. al.]\label{bases}
A subset $B$ of $[m+r]$ with $|B|=r$ is a basis of $\Mo[P,Q]$ if and
only if the associated lattice path $P(B)$ stays in the region
bounded by $P$ and ~$Q$, where $P(B)$ is a path which has its North steps on  the set $B$ positions and it has its East steps on the set $[m+r]-B$ positions.
\end{theorem}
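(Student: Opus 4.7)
The plan is to reduce both sides of the claimed equivalence to a common condition on the order statistics of $B$. Write $B = \{b_1 < b_2 < \cdots < b_r\}$, and recall that the North-step positions of $P$ and $Q$ form strictly increasing sequences $u_1 < u_2 < \cdots < u_r$ and $l_1 < l_2 < \cdots < l_r$, respectively, with $l_i \le u_i$ because $P$ never goes above $Q$. I will show that each of the two statements in the theorem is equivalent to the following condition:
\[
 (\star)\qquad l_i \le b_i \le u_i \ \text{ for every } i \in [r].
\]

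For the geometric side, the $i$th North step of $P(B)$ occurs at position $b_i$. The path $P(B)$ lies weakly above $P$ iff, at every column, it has taken at least as many North steps as $P$; equivalently, its $i$th North step occurs no later than the $i$th North step of $P$, i.e.\ $b_i \le u_i$. The dual statement gives that $P(B)$ lies weakly below $Q$ iff $b_i \ge l_i$. Hence $P(B)$ stays in the region bounded by $P$ and $Q$ exactly when $(\star)$ holds. This half is routine.

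For the matroidal side, $(\star)$ immediately yields that $i \mapsto b_i$ is a system of distinct representatives for $(N_1, \ldots, N_r)$, so $B$ is a partial transversal of size $r$, and therefore a basis of $\Mo[P,Q]$. The converse is the main step and requires a rearrangement argument. Suppose $B$ is a basis, witnessed by a bijection $\sigma:[r]\to[r]$ with $b_{\sigma(i)} \in N_i$ for each $i$. I claim $\sigma$ may be taken to be the identity. The argument proceeds by successively removing adjacent inversions in $\sigma$: if $\sigma(i) > \sigma(i+1)$ for some $i$, set $a = \sigma(i)$ and $c = \sigma(i+1)$, so $b_a > b_c$, $b_a \in N_i$, and $b_c \in N_{i+1}$. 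Using monotonicity $l_i \le l_{i+1}$ and $u_i \le u_{i+1}$, one checks
\[
 l_i \le l_{i+1} \le b_c < b_a \le u_i \le u_{i+1},
\]
so $b_c \in [l_i, u_i] = N_i$ and $b_a \in [l_{i+1}, u_{i+1}] = N_{i+1}$. Swapping $\sigma(i)$ and $\sigma(i+1)$ preserves the matching condition and strictly decreases the inversion count, so after finitely many steps $\sigma$ becomes the identity, giving $b_i \in N_i$ for all $i$, which is $(\star)$.

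Combining the two equivalences yields the theorem. The main obstacle is the rearrangement step: it is not a priori clear that an arbitrary matching of $B$ to $(N_1, \ldots, N_r)$ can be replaced by the ``sorted'' matching, and this is where the monotonicity of both endpoint sequences $(l_i)$ and $(u_i)$ is essential. This property is precisely what distinguishes presentations by intervals (lattice path matroids) from general transversal matroids, and it is the key structural fact driving the entire correspondence.
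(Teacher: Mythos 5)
Your proof is correct and complete: the reduction of both sides to the condition $l_i \le b_i \le u_i$, with the rearrangement step that sorts an arbitrary system of distinct representatives into the identity matching by removing adjacent inversions (using the monotonicity of both endpoint sequences $(l_i)$ and $(u_i)$), is exactly the standard argument. Note that the paper states this theorem without proof, citing Bonin, de Mier and Noy, so there is no internal proof to compare against; your argument is essentially the one in that reference.
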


A special class of the lattice path matroids are the 
generalized Catalan matroids defined as follows:

\begin{definition}
A lattice path matroid $\Mo$ is a \emph{generalized Catalan matroid}
if there is a presentation $(P,Q)$ of $\Mo$ with $P=E^m N^r$. In this
case we simplify the notation $\Mo[P,Q]$ to $\Mo[Q]$. If in addition the
upper path $Q$ is $(E^k N^l)^n$ for some positive integers $k,l,$
and $n$, we say that $\Mo[(E^k N^l)^n, E^m N^r]$ is the \emph{$(k,l)$-Catalan matroid}
$\Mo_n^{k,l}$.  In place of ${\Mo}_n^{k,1}$ we write ${\Mo}_n^k$; such
matroids are called \emph{$k$-Catalan matroids}.  In turn, we
simplify the notation $\Mo_n^1$ to $\Mo_n$; such matroids are called
\emph{Catalan matroids}.
\end{definition}

The generalized Catalan matroids were discovered by Crapo and rediscovered in various contexts; 
they have been called shifted matroids , PI-matroids ~\cite{Lou}, and freedom matroids.
\

Throughout this paper we investigate  lattice path matroid polytopes.

\section{Faces and Dimensions of Lattice Path Matroid Polytopes.}

In this section, we study the faces and dimensions of the lattice path matroid polytopes.
In general the faces of matroid polytopes are not well understood. 
The following is the main fundamental result in this area.
Edmonds ~\cite{E} as well as Gel$'$fand, Goresky, MacPherson and Serganova~\cite[Thm 4.1]{G}
 show the following characterization of matroid polytopes.

Let $\Mo$ be a matroid, then we have the following:
 \begin{lemma}\label{matedge}
 Two vertices $ e_{B_1}$ and $e_{B_2}$ are adjacent in $\Po(\Mo)$ if and only if $ e_{B_1} -  e_{B_2} =  e_i -  e_j$ for some $i$ and $j$.
\end{lemma}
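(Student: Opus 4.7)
The plan is to prove the two directions separately, with the easier direction being sufficiency.

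For the direction ($\Leftarrow$), assume $e_{B_1}-e_{B_2}=e_i-e_j$, so that $B_1=B\cup\{i\}$ and $B_2=B\cup\{j\}$ with $|B|=r-1$, where $r=\rank(\Mo)$. I will exhibit a linear functional maximized on $[e_{B_1},e_{B_2}]$ alone. Take the cost vector $c$ with $c_k=2$ for $k\in B$, $c_k=1$ for $k\in\{i,j\}$, and $c_k=0$ otherwise. Then $c\cdot e_{B_1}=c\cdot e_{B_2}=2r-1$. For any other basis $B'$: either $B\subseteq B'$, in which case $B'=B\cup\{l\}$ with $l\notin\{i,j\}$ and $c\cdot e_{B'}=2r-2$; or $|B'\cap B|\le r-2$, giving $c\cdot e_{B'}\le 2(r-2)+2=2r-2$. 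Hence $c$ attains its maximum exactly on $\{e_{B_1},e_{B_2}\}$, and the segment between them is a $1$-dimensional face of $\Po(\Mo)$.

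For the direction ($\Rightarrow$), suppose $[e_{B_1},e_{B_2}]$ is an edge of $\Po(\Mo)$, and let $d=\tfrac12|B_1\triangle B_2|$. I will argue that $d\ge 2$ leads to a contradiction. Pick $i\in B_1\setminus B_2$. By the symmetric (Brualdi) exchange axiom, there exists $j\in B_2\setminus B_1$ such that both
\[
B_3\defas (B_1\setminus\{i\})\cup\{j\}\quad\text{and}\quad B_4\defas (B_2\setminus\{j\})\cup\{i\}
\]
are bases of $\Mo$. When $d\ge 2$ one checks directly that $\{B_3,B_4\}\cap\{B_1,B_2\}=\emptyset$ (indeed $j\notin B_1$ forces $B_3\ne B_1$, while $B_3=B_2$ would force $B_1\triangle B_2=\{i,j\}$, i.e.\ $d=1$; similarly for $B_4$). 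The identity $e_{B_3}+e_{B_4}=e_{B_1}+e_{B_2}$ shows that the midpoint $m=\tfrac12(e_{B_1}+e_{B_2})$ lies in the relative interior of the segment $[e_{B_3},e_{B_4}]$. But if $[e_{B_1},e_{B_2}]$ is a face, then any convex combination equal to a point in its relative interior must use only vertices of that face, forcing $\{e_{B_3},e_{B_4}\}\subseteq\{e_{B_1},e_{B_2}\}$, a contradiction. Hence $d=1$, which is exactly the conclusion $e_{B_1}-e_{B_2}=e_i-e_j$.

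The main obstacle is the ($\Rightarrow$) direction, which hinges on invoking the symmetric exchange property (rather than just the one-sided basis exchange given in the definition of a matroid). One could either cite this as a standard matroid result or spend a paragraph proving it via the usual minimal-symmetric-difference argument. Once that property is in hand, both directions are short, and the proof reduces to recognizing that $[e_{B_1},e_{B_2}]$ being a face is equivalent to the midpoint $m$ not being expressible as a nontrivial convex combination of vertices outside the segment.
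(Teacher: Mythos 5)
The paper itself gives no proof of this lemma: it is stated as a quoted result of Edmonds and of Gel'fand--Goresky--MacPherson--Serganova \cite[Thm 4.1]{G}, so there is no internal argument to compare yours against. Your self-contained proof is correct and follows the standard line of reasoning for this characterization. The ($\Leftarrow$) direction is clean: your cost vector $c$ attains the value $2r-1$ exactly on $e_{B_1}$ and $e_{B_2}$ and at most $2r-2$ elsewhere (both in the case $B\subseteq B'$ and the case $|B'\cap B|\leq r-2$), so the segment is the face maximizing $c$ and hence an edge. The ($\Rightarrow$) direction via the identity $e_{B_3}+e_{B_4}=e_{B_1}+e_{B_2}$ and the fact that a face must contain every vertex appearing with positive weight in a convex representation of one of its relative-interior points is also sound, and your verification that $d\geq 2$ forces $\{B_3,B_4\}\cap\{B_1,B_2\}=\emptyset$ is correct.

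The one point that genuinely needs external support is the symmetric (Brualdi) exchange property: it does not follow immediately from the three independence axioms as listed in Section 2 of the paper, and you are right to flag it. It is, however, a true and standard theorem, so citing it (or including the short minimal-symmetric-difference proof you allude to) closes the argument completely. A minor alternative worth knowing: one can avoid symmetric exchange in the ($\Rightarrow$) direction by instead taking the edge's supporting linear functional $c$ and running the ordinary basis-exchange axiom on a maximizing basis, but your route is the more common one and is perfectly valid.
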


The circuit exchange axiom gives rise to the following equivalence
relation on the ground set $[n]$ of the matroid $\Mo$: We say $i$ and $j$ are
\textit{equivalent} if there exists a circuit ~$C$ with $\{i,j\}
\subseteq C$. The equivalence classes are the {\em connected
components} of $\Mo$. Let $c(\Mo)$ denote the number of connected
components of $\Mo$. We say that $\Mo$ is connected if $c(\Mo) =1 $. The
following proposition has been shown in \cite{Fstum} by Feichtner and Sturmfels.

\begin{proposition}[Feichtner, Sturmfels]\label{dimformula}
The dimension of the matroid polytope $\Po(\Mo)$ equals $n-c(\Mo)$.
\end{proposition}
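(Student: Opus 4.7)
The plan is to bracket $\dim \Po(\Mo)$ from both sides. For the upper bound I will use that every basis of $\Mo$ meets each connected component in a fixed number of elements, yielding $c(\Mo)$ linearly independent equations satisfied by the polytope. For the lower bound I will reduce to the case of a connected matroid via the direct-sum decomposition, and then invoke the edge characterization of Lemma~\ref{matedge}.

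Let $C_1,\ldots,C_{c(\Mo)}$ be the connected components of $\Mo$. It is standard that $\Mo = \bigoplus_k \Mo|_{C_k}$, so every basis $B$ of $\Mo$ splits uniquely as $B = B_1 \sqcup \cdots \sqcup B_{c(\Mo)}$ with $B_k$ a basis of $\Mo|_{C_k}$; in particular $|B \cap C_k| = r_k := \rank(\Mo|_{C_k})$. Hence $\Po(\Mo)$ is contained in the intersection of the $c(\Mo)$ hyperplanes $\sum_{i \in C_k} x_i = r_k$, which are linearly independent because the $C_k$ partition $[n]$. This gives $\dim \Po(\Mo) \le n - c(\Mo)$.

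The same decomposition identifies $\Po(\Mo)$ with the Cartesian product $\prod_k \Po(\Mo|_{C_k})$ under the natural splitting $\R^{[n]} = \bigoplus_k \R^{C_k}$, and the dimension of a Cartesian product is the sum of the factor dimensions. So it suffices to prove $\dim \Po(\Mo') = n' - 1$ for every connected matroid $\Mo'$ on $n'$ elements; the case $n' = 1$ is trivial, so assume $n' \ge 2$. For any distinct $i, j \in [n']$ connectedness supplies a circuit $C$ with $\{i, j\} \subseteq C$, and since $\Mo'$ is connected with $n' \ge 2$ the element $j$ is not a coloop, so the independent set $C \setminus \{j\}$ extends to a basis $B'$ of $\Mo'$ with $j \notin B'$. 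The fundamental circuit of $j$ with respect to $B'$ is then exactly $C$, and $B'' := (B' \cup \{j\}) \setminus \{i\}$ is again a basis. Lemma~\ref{matedge} identifies $e_{B''} - e_{B'} = e_j - e_i$ as an edge direction of $\Po(\Mo')$. Letting $i, j$ vary over $[n']$ shows that every $e_j - e_i$ lies in the linear span of $\Po(\Mo') - e_{B'}$, so the affine hull of $\Po(\Mo')$ has dimension at least $n' - 1$.

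Combining the two bounds yields $\dim \Po(\Mo) = \sum_k (|C_k| - 1) = n - c(\Mo)$. The most delicate step is the lower bound: one must know that any two elements of a connected matroid lie in a common circuit (the transitivity of the common-circuit relation, a form of strong circuit exchange) and must arrange $B'$ to avoid $j$ so that the exchange producing the edge direction $e_j - e_i$ goes through. Everything else is bookkeeping on the direct-sum decomposition and a direct application of Lemma~\ref{matedge}.
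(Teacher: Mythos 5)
Your proof is correct. Note first that the paper does not actually prove this proposition: it is stated as a quoted result of Feichtner and Sturmfels, with a bare citation and no argument, so there is no internal proof to compare against. Your two-sided argument is the standard one and is complete: the upper bound via the $c(\Mo)$ disjointly supported (hence independent) equations $\sum_{i\in C_k} x_i = r_k$ is right, and the lower bound correctly reduces to the connected case through the identification $\Po(\Mo)=\prod_k \Po(\Mo|_{C_k})$ and then produces all directions $e_j-e_i$ by a basis exchange along a circuit containing both $i$ and $j$ --- which exists precisely because the paper defines connectivity by the common-circuit equivalence relation. Two small remarks. First, the appeal to Lemma~\ref{matedge} is decorative: you only need that $e_{B'}$ and $e_{B''}$ are both points of the polytope, so their difference lies in the direction space of its affine hull; adjacency is stronger than required. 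Second, your care in choosing $B'$ to avoid $j$ (using that $j$ is not a coloop) so that the fundamental circuit of $j$ relative to $B'$ is exactly $C$ and hence contains $i$ is exactly the step that is easy to fumble, and you handle it correctly. The one ingredient you rightly flag as imported is the transitivity of the common-circuit relation; the paper asserts without proof that this relation is an equivalence, so you are entitled to it here.
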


Let $P=p_1p_2\dots p_{m+r}$ and $Q=q_1q_2\dots q_{m+r}$ be two
lattice paths from $(0,0)$ to $(m,r)$ with $P$ never going above
$Q$.  The following result explain the number of connected components in the lattice path matroid polytopes.

\begin{proposition}[Bonin \emph{et al.}]\label{dsum}
The class of lattice path matroids is closed under the direct sums.
Furthermore, the lattice path matroid $\Mo[P,Q]$ is connected if and
only if the bounding lattice paths $P$ and $Q$ meet only at $(0,0)$
and $(m,r)$.
\end{proposition}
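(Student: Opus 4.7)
The plan is to deduce both parts from the same picture of concatenating lattice paths. Given presentations $(P_1, Q_1)$ and $(P_2, Q_2)$ of lattice path matroids $\Mo_1, \Mo_2$ with endpoints $(m_1, r_1)$ and $(m_2, r_2)$, I would form $P = P_1 P_2$ and $Q = Q_1 Q_2$ as words in $\{E, N\}$. Both $P$ and $Q$ pass through the corner $(m_1, r_1)$, so Theorem~\ref{bases} forces every basis-path of $\Mo[P, Q]$ through that point; splitting a path there gives a bijection between bases of $\Mo[P, Q]$ and pairs of bases of $\Mo_1$ and $\Mo_2$. After identifying $[m_1+r_1+m_2+r_2]$ with $[m_1+r_1] \sqcup [m_2+r_2]$ by the obvious shift, this is exactly the basis description of $\Mo_1 \oplus \Mo_2$, proving closure under direct sums. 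The same splitting immediately handles the easy direction of the equivalence: if $P$ and $Q$ meet at some intermediate lattice point $(a, b)$ with $0 < a+b < m+r$, then $\Mo[P, Q]$ is a direct sum of two lattice path matroids on nonempty ground sets, so $c(\Mo[P,Q]) \ge 2$ and $\Mo[P,Q]$ is disconnected.

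The harder direction is to prove connectedness when $P$ and $Q$ meet only at the two endpoints. I would establish the sharper statement that every pair of consecutive indices $k, k+1$ with $1 \le k \le m+r-1$ lies in a common circuit; chaining these transitively yields a single connected component. Setting $p_k = |\{i \le k : p_i = N\}|$ and $q_k = |\{i \le k : q_i = N\}|$, the hypothesis becomes $p_k < q_k$ for $0 < k < m+r$. The goal is to build a basis-path $R$ whose steps at positions $k, k+1$ read $EN$ and for which the $EN \to NE$ flip $R'$ also stays in the region; then $R$ and $R'$ are bases with symmetric difference exactly $\{k, k+1\}$, and the fundamental circuit of $k$ with respect to the basis corresponding to $R$ contains both $k$ and $k+1$. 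The natural choice is to let $R$ coincide with $P$ on its first $k$ steps, so that $R$ visits $(k - p_k, p_k)$ after step $k$ and $(k - p_k, p_k + 1)$ after step $k + 1$.

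The main obstacle is checking that all intermediate points of $R$ and $R'$ lie in the region. I would address this by first proving an auxiliary reachability lemma: any lattice point $(x, y)$ with $p_{x+y} \le y \le q_{x+y}$ is both reachable from $(0, 0)$ and able to reach $(m, r)$ via paths bounded by $P$ and $Q$. This is a routine induction on $x+y$: if $y > p_{x+y}$ then $(x, y-1)$ is still in the region, while if $y = p_{x+y}$ one simply retreats along $P$ itself. Applying the lemma at the three relevant lattice points at steps $k-1, k, k+1$ reduces to the inequalities $p_k \le q_{k-1}$, $p_k + 1 \le q_{k+1}$, and, when step $k$ of $P$ is $N$, $p_{k-1} < q_{k-1}$, each of which follows from the hypothesis $p_j < q_j$ on $0 < j < m+r$ combined with the unit-step bounds $p_{j+1} \le p_j + 1$ and $q_{j+1} \ge q_j$. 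With the lemma in hand, concatenating a valid prefix, the $EN$ pair, and a valid suffix yields $R$, and the same construction with $NE$ in place of $EN$ produces $R'$.
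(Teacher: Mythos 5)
The paper offers no proof of this proposition: it is imported from Bonin, de Mier and Noy as a citation, so there is nothing in the text to compare your argument against line by line. Judged on its own, your proof is correct and self-contained, and it is worth having. The concatenation $P=P_1P_2$, $Q=Q_1Q_2$ together with Theorem~\ref{bases} does give closure under direct sums, since $p_{m_1+r_1}=q_{m_1+r_1}=r_1$ forces every path in the region through the pinch point $(m_1,r_1)$, and the bases therefore split exactly as those of $\Mo_1\oplus\Mo_2$; running the same splitting backwards at an interior meeting point yields the easy half of the connectivity criterion. For the converse, reducing to the existence, for each $k$, of two bases whose paths differ by the $EN\leftrightarrow NE$ flip at positions $k,k+1$ is sound: the fundamental-circuit argument correctly places $k$ and $k+1$ in a common circuit, the reachability lemma is the standard (and correctly sketched) fact about the region between nested paths, and the inequalities you list do all follow from $p_j<q_j$ for $0<j<m+r$. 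The one blemish is the sentence claiming that $R$ ``coincides with $P$ on its first $k$ steps'': when the $k$th step of $P$ is $N$ this is incompatible with requiring $R_k=E$, and a path that follows $P$ for $k-1$ steps and then takes an $E$ step would sit at height $p_k-1<p_k$ at time $k$, outside the region. What you actually use, and what your own inequalities verify, is that $R$ reaches $P$'s lattice point $(k-p_k,\,p_k)$ at time $k$ by an $E$ step from $(k-1-p_k,\,p_k)$, a point lying strictly above $P$ that belongs to the region precisely because $p_{k-1}<q_{k-1}$; the prefix to that point is supplied by the reachability lemma rather than by $P$ itself. With that phrasing repaired the argument is complete.
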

Applying Propositions ~\ref{dimformula} and ~\ref{dsum}, we have the following lemma:

\begin{lemma} The dimension of the lattice path matroid polytope
$\Po(\Mo[P,Q])$ is $m+r-k+2$, where $k$ is the number of intersection
vertices of the paths $P$ and $Q$.
\end{lemma}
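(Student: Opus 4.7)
The plan is to invoke the Feichtner--Sturmfels formula from Proposition~\ref{dimformula}, namely $\dim \Po(\Mo) = n - c(\Mo)$ with $n = m+r$, so that the theorem reduces to computing the number of connected components of $\Mo[P,Q]$ from the geometry of $P$ and $Q$.

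First I would enumerate the common vertices of $P$ and $Q$ in the order they are visited, $v_0 = (0,0), v_1, \dots, v_{k-1} = (m,r)$. For each $i \in \{1,\dots,k-1\}$ let $P_i, Q_i$ denote the sub-paths of $P, Q$ from $v_{i-1}$ to $v_i$; translating $v_{i-1}$ to the origin, the pair $(P_i, Q_i)$ determines a lattice path matroid $\Mo[P_i, Q_i]$ on a ground set of size $s_i - s_{i-1}$, where $s_i$ is the number of steps needed to reach $v_i$. By construction $P_i$ and $Q_i$ meet only at their two endpoints, so Proposition~\ref{dsum} yields that each $\Mo[P_i, Q_i]$ is connected.

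The main step, and the one I expect to be the principal obstacle, is establishing the direct-sum decomposition $\Mo[P,Q] \cong \bigoplus_{i=1}^{k-1} \Mo[P_i, Q_i]$. The key geometric observation is that every lattice path $R$ bounded between $P$ and $Q$ must pass through each common vertex $v_i$: writing $p_y^{s_i}, r_y^{s_i}, q_y^{s_i}$ for the $y$-coordinates of $P, R, Q$ after $s_i$ steps, the inequalities $p_y^{s_i} \leq r_y^{s_i} \leq q_y^{s_i}$ combined with $p_y^{s_i} = q_y^{s_i}$ pin $R$ to $v_i$. Via Theorem~\ref{bases}, the basis set of $\Mo[P,Q]$ then factors as the Cartesian product of the basis sets of the $\Mo[P_i, Q_i]$ on the disjoint index blocks $[s_{i-1}+1, s_i]$, which is precisely the basis characterization of a direct sum of matroids.

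Combining these steps, $c(\Mo[P,Q])$ equals the number of segments into which the common vertices partition $P$, and substituting into Proposition~\ref{dimformula} gives the claimed dimension. The only bookkeeping item is the alignment of ground-set index blocks in the direct-sum decomposition, which is immediate once the geometric observation above is in hand.
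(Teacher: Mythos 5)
Your approach is the same as the paper's: the paper derives this lemma by simply citing Propositions~\ref{dimformula} and~\ref{dsum}, and your proposal supplies exactly the missing glue, namely that every path bounded by $P$ and $Q$ must pass through each common vertex of $P$ and $Q$, so $\Mo[P,Q]$ decomposes as the direct sum of the lattice path matroids on the segments between consecutive common vertices, each of which is either a single-element (loop/coloop) summand or connected by Proposition~\ref{dsum}. One point you should not gloss over, however: your own computation gives $c(\Mo[P,Q])=k-1$, since $k$ common vertices cut the region into $k-1$ segments, and hence $\dim\Po(\Mo[P,Q])=(m+r)-(k-1)=m+r-k+1$, which is \emph{not} the stated $m+r-k+2$. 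The value $m+r-k+1$ is the one consistent with Corollary~\ref{dim}: for the Catalan matroid the ground set has $2n$ elements, the bounding paths meet in $k=4$ vertices, $c(\Mo_n)=3$, and the dimension is $2n-3=2n-4+1$. So the off-by-one lies in the lemma's statement rather than in your argument, but asserting that the substitution ``gives the claimed dimension'' papers over a real discrepancy; you should state the corrected formula explicitly rather than claim agreement with the statement as written.
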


\begin{corollary}\label{dim}
The Catalan matroid polytope $\Po(\Mo_n)$, for any $n\geq 2$, has
$c(\Mo_n)=3$ connected components and its dimension is $2n-3.$
\end{corollary}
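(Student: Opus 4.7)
The plan is to apply the Feichtner--Sturmfels dimension formula (Proposition~\ref{dimformula}) together with the direct-sum decomposition of Proposition~\ref{dsum}, so that the entire computation reduces to locating the intersection vertices of the two bounding paths of $\Mo_n = \Mo[(EN)^n, E^n N^n]$.

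First I would enumerate the vertices traversed by each path. The lower path $P = E^n N^n$ visits $(i,0)$ for $0 \le i \le n$ and then $(n,j)$ for $0 \le j \le n$, while the upper path $Q = (EN)^n$ traces the staircase $(0,0), (1,0), (1,1), (2,1), (2,2), \ldots, (n,n-1), (n,n)$. For $n \ge 2$, comparing these two vertex sets shows the shared vertices are exactly $(0,0)$, $(1,0)$ (both paths open with $E$), $(n,n-1)$ (both paths close with $N$), and $(n,n)$: every other vertex of $P$ is either $(i,0)$ with $i \ge 2$ or $(n,j)$ with $1 \le j \le n-2$, and none of these meets the staircase $Q$. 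This vertex-by-vertex verification is the only step that requires any care, and it is the only place where the hypothesis $n \ge 2$ is genuinely used, since when $n = 1$ the candidates $(1,0)$ and $(n,n-1)$ collapse into the same point.

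These four meeting points cut the ground set $[2n]$ into three blocks, and Proposition~\ref{dsum} then expresses $\Mo_n$ as a direct sum of sub-matroids supported on $\{1\}$, $\{2,\ldots,2n-1\}$, and $\{2n\}$. The first summand is a loop (the shared initial $E$ step) and the third is a coloop (the shared terminal $N$ step), each contributing one connected component. The middle summand is the lattice path matroid from $(1,0)$ to $(n,n-1)$ bounded below by $E^{n-1}N^{n-1}$ and above by $NENE\cdots NE$; these two bounding paths share only their two endpoints, so the connectedness half of Proposition~\ref{dsum} shows that the middle summand is itself connected. Therefore $c(\Mo_n) = 3$, and Proposition~\ref{dimformula} yields $\dim \Po(\Mo_n) = 2n - c(\Mo_n) = 2n - 3$.
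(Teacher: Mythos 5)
Your argument is correct and follows essentially the route the paper intends for this corollary: locate the intersection vertices of the two bounding paths, split $\Mo_n$ into a loop, a coloop, and a connected middle summand via Proposition~\ref{dsum}, and then apply Proposition~\ref{dimformula}. By computing $c(\Mo_n)=3$ directly rather than plugging your count $k=4$ into the paper's intermediate formula $m+r-k+2$ (which would give $2n-2$), you also quietly sidestep an off-by-one in that lemma and land on the correct value $2n-3$.
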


In the following  lemma,  we give a combinatorial interpretation of the number of edges of the generalized Catalan matroid polytopes as follows:

\begin{lemma} Consider the lattice path matroid polytope $\Po(\Mo[E^m
N^{r},Q])=\Po(\Mo[Q])$. The number of edges of this polytope is equal to the sum of areas
between the paths from $(0,0)$ to $(m,r)$ which do not go above $Q$ or below the path $E^m
N^{r}.$
\end{lemma}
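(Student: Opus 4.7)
The plan is to set up a bijective accounting in which each edge of $\Po(\Mo[Q])$ is attributed to exactly one of its two endpoints, and then to show that the number attributed to a valid path $P'$ equals $\mathrm{area}(P')$, the area between $P'$ and the lower path $E^m N^r$ (equivalently, the sum of the heights at which $P'$ takes its $E$-steps).

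By Lemma~\ref{matedge}, an edge of the polytope corresponds to an unordered pair $\{P_1,P_2\}$ of valid lattice paths differing by exchanging an $N$-step at some position $i$ with an $E$-step at some position $j\ne i$. A short computation shows that if $i<j$ and $P_1$ is the path with the $N$ at the earlier position $i$, then $\mathrm{area}(P_1)-\mathrm{area}(P_2)=j-i>0$, because the two paths coincide outside positions $i$ and $j$ and $P_1$ sits exactly one unit above $P_2$ on the intermediate range. In particular every edge has a unique endpoint of strictly larger area, and attributing each edge to this endpoint gives
\[
\#\,\mathrm{edges}\ =\ \sum_{P'} f(P'),
\]
where $f(P')$ counts the pairs $(i,j)$ with $i<j$, $P'_i=N$, $P'_j=E$ for which the swapped path $P''$ is still a valid path in the region.

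The key observation is that every such swap is automatically valid: moving an $N$-step from position $i$ to a later position $j$ decreases by one the number of $N$-steps in every prefix of length in $\{i,i+1,\dots,j-1\}$ and leaves the other prefix counts unchanged, so $P''$ is dominated by $P'$ in the usual partial order on lattice paths, and hence still lies weakly below $Q$; the lower bound $E^m N^r$ is automatic since every lattice path from $(0,0)$ to $(m,r)$ lies above it. Thus $f(P')$ equals the raw inversion count $|\{(i,j):i<j,\ P'_i=N,\ P'_j=E\}|$, which by the elementary identity $\sum_{j:\,P'_j=E}|\{i<j:P'_i=N\}|=\mathrm{area}(P')$ equals $\mathrm{area}(P')$. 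Summing over all valid paths yields the claimed formula.

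The only subtle point is the automatic validity of area-decreasing swaps; this relies essentially on the hypothesis that the lower bound is the minimal path $E^m N^r$. For a general lattice path matroid $\Mo[P,Q]$ with $P\ne E^m N^r$, an area-decreasing swap can push the path below $P$, so this clean identity is genuinely special to the generalized Catalan case.
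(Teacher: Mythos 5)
Your proof is correct and follows essentially the same route as the paper's: both identify each edge, via Lemma~\ref{matedge}, with a pair of valid paths obtained by exchanging an earlier $N$-step with a later $E$-step, observe that such a downward swap automatically stays in the region when the lower bounding path is $E^mN^r$, and then match the swaps available at a given path with the unit squares below it. Your write-up is tighter on the two points the paper glosses over --- the attribution of each edge to exactly one endpoint (the one of larger area) and the explicit verification that validity of the swapped path needs only the upper bound $Q$ --- but these are refinements of the same argument rather than a different approach.
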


\begin{proof}
We know that the vertices of the generalized Catalan matroid polytope 
\\
$\Po(\Mo[E^m N^{r},Q])$ are the paths with $m$ East  steps and $r$ North steps which does not exceed $Q$. By Lemma ~\ref{matedge},
the number of edges of this polytope is equal to the number of paths
$P$ and $P^{'}$ in this region which  differ in one $N$
step and one $E$ step consecutively. Without lose of generality,  we may assume that $P=P_1NP_2EP_3$ and $P^{'}=P_1EP_2NP_3.$

For each path $P$ in $[E^mN^{r},Q]$, we can always switch ordered pairs of $N$
step and $E$ step to one other pair of $E$ step and $N$ step and
obtain the other path $P^{'}$ in $[E^mN^{r},Q]$. Clearly, the vertices associated $P$ and $P^{'}$ in $\Mo[E^mN^{r},Q]$ are adjacent to each other.
We only need to count the number of all pairs of paths $P$ and $P^{'}$ which only different in $N$ and $E$ steps consequently.  For any  consecutive pair of $N$ and $E$ steps  in the path $P$, we can construct a path $P^{'}$ which is different by $P$ only in those position. For any path $P$, these pairs of $N$ and $E$ steps are in bijection with squares below path $P$. We can conclude that the  number of all pairs of paths $P$ and $P^{'}$ which only different in $N$ and $E$ steps consequently is equal to the sum of the areas between  all the paths in $[E^mN^{r},Q]$  and  the path $E^{m} N^{r}$ consisting of $N$ and $E$ steps.
 \end{proof}

\begin{lemma}\label{wang} The number of edges, $a(n)$, of the Catalan matroid polytope 
\\
 $\Po(\Mo_{n}) =\Po(\Mo[E^{n}N^{n}, (EN)^{n} ])= \Po(\Mo[E^{n-1}N^{n-1}, (NE)^{n-1} ])$, $a(n)$, is the the total area below paths consisting of  $E$, $(1,0)$, and $N$, $(0,1)$,  steps from $(0,0)$ to $(n,n)$, that stay weakly below
$y=x$. So we can calculate $a(n)$ as follows:
\begin{equation}
a(n) = \frac{n^2}{2}\frac{1}{n+1}\binom{2n}{n}-  \frac{4^n}{2}-\frac{1}{4}\binom{2n+2}{n+1}.
\end{equation}
\end{lemma}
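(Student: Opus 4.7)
The first assertion---that $a(n)$ equals the total area below the relevant Dyck paths---is immediate from the preceding lemma applied to the presentation $P = E^n N^n$, $Q = (EN)^n$: the lattice paths from $(0,0)$ to $(n,n)$ bounded between these two are precisely the paths staying weakly below $y=x$, and the region between such a path $p$ and the L-shaped $E^n N^n$ is exactly the region under $p$. So the task reduces to computing $a(n) = \sum_{p} \mathrm{area}(p)$, where the sum ranges over Dyck paths of size $n$ and $\mathrm{area}(p) = \sum_{i=0}^{n-1} h_i(p)$ records the heights of the east steps column by column.

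To evaluate $a(n)$, the plan is to pass through a generating-function identity. Every Dyck path $p$ with $n \ge 1$ factors uniquely as $p = E\, p_1 \,N\, p_2$, where $(k,k)$ is the first return of $p$ to the diagonal after the opening east step, $p_1$ is a Dyck path of size $k-1$ lifted into $[1,k] \times [0, k-1]$, and $p_2$ is a Dyck path of size $n-k$ sitting on $[k,n] \times [k,n]$. Tracking column heights yields $\mathrm{area}(p) = \mathrm{area}(p_1) + \mathrm{area}(p_2) + k(n-k)$, where the cross term records that the $n-k$ east steps inherited from $p_2$ each sit at height at least $k$. Summing over $p$ and $k$ gives the recursion
\begin{equation*}
a(n) = \sum_{k=1}^n \bigl( a(k-1)\, C_{n-k} + C_{k-1}\, a(n-k) + k(n-k)\, C_{k-1}\, C_{n-k} \bigr),
\end{equation*}
which, in terms of $A(x) := \sum_n a(n) x^n$ and $C(x) := \sum_n C_n x^n = \frac{1-\sqrt{1-4x}}{2x}$, translates to $A(x) = 2xC(x)A(x) + x^2 C'(x)\bigl(xC'(x) + C(x)\bigr)$.

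The key algebraic step is to invoke the identities $(xC(x))' = xC'(x) + C(x) = \frac{1}{\sqrt{1-4x}}$ and $1 - 2xC(x) = \sqrt{1-4x}$ to collapse the equation into $A(x) = \frac{x^2 C'(x)}{1-4x}$. Substituting $u := \sqrt{1-4x}$, under which $xC(x) = (1-u)/2$, transforms $A(x)$ into the rational function $(1-u)^2/(4u^3)$, and expanding yields the partial-fraction decomposition
\begin{equation*}
A(x) = \frac{1}{4(1-4x)^{3/2}} - \frac{1}{2(1-4x)} + \frac{1}{4\sqrt{1-4x}}.
\end{equation*}
Extracting coefficients via $[x^n](1-4x)^{-1/2} = \binom{2n}{n}$, $[x^n](1-4x)^{-1} = 4^n$, and $[x^n](1-4x)^{-3/2} = \tfrac{n+1}{2}\binom{2n+2}{n+1}$, and then using $(n+1)\binom{2n+2}{n+1} = 2(2n+1)\binom{2n}{n}$ to consolidate the central-binomial contributions with the Catalan number $C_n = \tfrac{1}{n+1}\binom{2n}{n}$, produces the stated closed form. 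The main obstacle is purely algebraic: without the substitution $u = \sqrt{1-4x}$ the expression $x^2 C'(x)/(1-4x)$ resists clean partial-fraction expansion, and only after this simplification does the answer crystallize into the three-term Catalan / powers-of-four / $\binom{2n+2}{n+1}$ form.
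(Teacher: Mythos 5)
Your proof is correct and follows essentially the same route as the paper: both arguments reduce the edge count to the total-area statistic over Dyck paths via the preceding lemma, set up a first-return convolution recursion, and solve it with Catalan generating functions; the only difference is that you track the area below the path directly, whereas the paper tracks the complementary area $A_n$ between the path and the diagonal and finishes with $a(n)=\frac{n^2}{2}C_n-A_n$. One small caveat: your closed form $\frac{n+1}{2}\binom{2n}{n}-\frac{4^n}{2}$ agrees with the displayed formula only when the latter is parenthesized as $\frac{n^2}{2}C_n-\bigl(\frac{4^n}{2}-\frac14\binom{2n+2}{n+1}\bigr)$, i.e.\ with the sign of the final term read as $+\frac14\binom{2n+2}{n+1}$ rather than as literally printed.
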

\begin{proof}
 
Let $a(n)$ denote the total area below paths consisting of steps $E$ and $N$ from $(0,0)$ to $(n, n)$ that stay weakly below
$y=x.$ Furthermore, let $A_n$ be the total area between the paths consisting of steps $E$ and $N$ from $(0,0)$ to $(n,n)$ and the line $x=y.$
 The $n$th Catalan number, $C_n$, is the number of paths from $(0,0)$ to $(n, n)$ that stay weakly below
$y=x.$

We proceed by induction, it is not hard to verify that

\begin{equation}\label{no}
A_{n+1}=2\sum_{k=0}^{n}(k+\frac{1}{2}) C_kC_{n-k} +\sum_{k=0}^{n} A_k C_{n-k} + \sum_{k=0}^{n} A_{n-k}C_k.
\end{equation}

Therefore, we have:
\begin{equation}
A_{n+1}=2\sum_{k=0}^{n} A_k C_{n-k}+ \frac{1}{2}\sum_{k=0}^{n} C_k C_{n-k} +\sum_{k=0}^{n} kC_kC_{n-k}.
\end{equation} 

 Let $C(t)$ and $A(t)$ be the generating functions for $C_n$ and $A_n,$ we have
 \begin{equation}
 \frac{A(t)}{t}= 2A(t)C(t)+ \frac{1}{2}{C(t)}^2 + t C^{'}(t)C(t).
 \end{equation}
 
 By differentiating, we obtain the following generating function for $A(t)$ 

\begin{equation}
 A(t)=\frac{{1-2t-\sqrt{1-4t}}}{4t(1-4t)} .
 \end{equation}
 
Therefore, we obtain the value for $A_n$ as follows:

\begin{equation}
 A_n= \frac{4^n}{2}-\frac{1}{4}\binom{2n+2}{n+1} .
\end{equation}

From the definition of $A_n$ and $a(n)$, we have,
\begin{equation}
a(n)= \frac{n^2}{2}\frac{1}{n+1}\binom{2n}{n}-  \frac{4^n}{2}-\frac{1}{4}\binom{2n+2}{n+1}.
 \end{equation}
 
 The number of edges of  the  Catalan matroid polytope  $\Po(\Mo_n)$ is  $a(n)$. 
  \end{proof}


Consider the connected lattice path matroid polytope $\Po({\Mo[P,Q]})$, where $P$ and $Q$ are paths from $(0,0)$ to $(m,r)$. We have $P= E^{{\alpha}_1}N^{{\alpha}_2}\cdots N^{{\alpha}_{2k}}$
and also $Q=N^{{\beta}_1}E^{{\beta}_2}\cdots E^{{\beta}_{2r}}$. 
As we know, any bases of the matroid $\Mo[P,Q]$  associated to the vector $X=x_1\cdots x_{m+r}$, where vector $X$ is a base for $\Mo[P,Q]$ if and only if $P(X)$ lies in the region $[P,Q].$  Let $p_i$ and $q_i$ be the number of $N$ steps which occur in the first $i$ steps of paths $P$ and $Q$, where $1 \leq i \leq m+r$, so $p_{m+r}=q_{m+r}=m$. Therefore, $P(X)$ lies in the region $[P,Q]$ if and only if $p_i \leq x_1+\cdots+x_i \leq q_i$ for all $1\leq i \leq m+r$.

\begin{lemma}\label{hyperplane}
The polytope $\Po({\Mo[P,Q]})$  can be determined by the following inequalities, 
\begin{itemize}
\item $p_i \leq x_1+\cdots+x_i \leq q_i$ for all $1\leq i \leq m+r$,
where $x_1+\cdots+x_{m+r}=m$,
\item $0\leq x_i \leq 1$.
\end{itemize}
where $p_i$ and $q_i$ be the number of $N$ steps that occur in the first $i$ steps of paths $P$ and $Q$.
\end{lemma}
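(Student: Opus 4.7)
The plan is to show that the polyhedron $\Co$ cut out by the stated inequalities coincides with $\Po(\Mo[P,Q])$. One inclusion is essentially immediate from Theorem~\ref{bases}, while the other reduces to verifying that $\Co$ has only integer vertices.

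For $\Po(\Mo[P,Q]) \subseteq \Co$: every vertex of $\Po(\Mo[P,Q])$ is the incidence vector $e_B$ of some basis $B$, and by Theorem~\ref{bases} the associated lattice path $P(B)$ lies weakly between $P$ and $Q$. Since $(e_B)_j = 1$ iff position $j$ is a North step of $P(B)$, the partial sum $(e_B)_1 + \cdots + (e_B)_i$ counts the North steps of $P(B)$ among its first $i$ steps. The condition that $P(B)$ remains in the region $[P,Q]$ translates directly into $p_i \le (e_B)_1 + \cdots + (e_B)_i \le q_i$ for every $i$, and $(e_B)_j \in \{0,1\}$ is immediate. Note that the global sum constraint is already forced by taking $i = m+r$, since $p_{m+r}=q_{m+r}$.

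For the reverse inclusion it suffices to prove that $\Co$ is an integral polytope, because the integer points of $\Co$ are the $0/1$ vectors whose prefix sums lie in $[p_i,q_i]$, and by Theorem~\ref{bases} these are precisely the basis incidence vectors. I intend to deduce integrality from total unimodularity of the constraint matrix. Writing the system as $Ax \le b$, each row of $A$ is either $\pm(1,\ldots,1,0,\ldots,0)$ (from the prefix-sum inequalities $x_1+\cdots+x_i \le q_i$ and $-x_1-\cdots-x_i \le -p_i$) or a signed unit vector $\pm e_j$ (from the box constraints). In every row the nonzero entries therefore form a consecutive block of columns, so $A$ has the consecutive-ones property; since multiplying rows by $-1$ preserves total unimodularity, $A$ is totally unimodular, and hence every vertex of $\Co$ is integral. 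Combined with the identification of the integer points of $\Co$, this yields $\Co = \Po(\Mo[P,Q])$.

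The main obstacle is a clean verification of total unimodularity. This is a classical consequence of the consecutive-ones property, provable by induction on the order of a square submatrix: if a column contains at most one nonzero entry one expands along it, and otherwise one subtracts two rows whose intervals of nonzeros overlap appropriately, reducing to a smaller submatrix of the same type. A more combinatorial alternative would bypass total unimodularity entirely: given $x \in \Co$ with a non-integer coordinate, locate a pair of indices along which $x$ can be perturbed in both directions while staying in $\Co$ (the consecutive-ones structure of the active constraints guarantees such a pair exists), and then express $x$ as a proper convex combination of two other points of $\Co$; iterating reduces the number of non-integer entries and realizes $x$ inside $\conv(\Co \cap \{0,1\}^{m+r}) = \Po(\Mo[P,Q])$.
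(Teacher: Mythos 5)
Your proposal is correct, and it takes a genuinely different route from the paper. The first inclusion is handled the same way in both arguments: vertices of $\Po(\Mo[P,Q])$ are basis incidence vectors, Theorem~\ref{bases} identifies these with paths staying in $[P,Q]$, and the prefix-sum inequalities are exactly the "stay in the region" condition, so convexity gives the containment. For the reverse inclusion the paper argues directly and inductively: given a point $a$ satisfying the inequalities, if some coordinate is $0$ or $1$ it restricts to a face and inducts on $m+r$; otherwise it picks the minimal coordinate $a_i$, a vertex $X_i$ with $i$th coordinate $1$, and writes $a$ as a convex combination of $X_i$ and the point $B=(a-a_iX_i)/(1-a_i)$, which has a zero entry, reducing to the previous case. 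You instead observe that the constraint matrix consists of prefix-indicator rows and signed unit rows, hence has the consecutive-ones (interval) property, is totally unimodular, and therefore the polyhedron is integral with integer right-hand sides; its lattice points are precisely the $0/1$ vectors whose prefix sums lie in $[p_i,q_i]$, i.e.\ the basis incidence vectors by Theorem~\ref{bases}. What your approach buys is rigor at the delicate step: the paper's claim that $B$ "satisfies the inequalities conditions" is asserted without justification and actually requires a careful choice of the vertex $X_i$, whereas integrality via total unimodularity (or the perturbation argument you sketch as an alternative, which is essentially the standard proof of integrality for such systems) settles the matter cleanly. The cost is invoking, or re-proving, the classical fact that interval matrices are totally unimodular, which the paper's elementary induction avoids.
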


\begin{proof}
Every vertex of $\Po({\Mo[P,Q]})$ satisfy the conditions $(1)$ and $(2)$. Therefore, every point in $\Po({\Mo[P,Q]})$ satisfy both of these conditions.
Now we would like to show that every point $a=(a_1\cdots a_{m+r})$ satisfying inequalities $(1)$ and $(2)$ is 
inside $\Po({\Mo[P,Q]})$. In case  there exists $1\leq i \leq m+r$ so that $a_i=0$, we can proceed by induction on $m+r$. In this case, the point $a$ is in convex hull of the vertices in $\Po({\Mo[P,Q]})$ whose $i$th vertices are $0$, so it lies in $\Po({\Mo[P,Q]})$. Similarly, we can proceed for the case $a_i=1$  Otherwise, let $a_{i}$ be the minimum value of vector in $a$ and let $X_i$ be a vertex whose $i$th coordinate is $1$. We define the vector $B=\dfrac{a-a_iX_i}{1 - a_i}$. This vector satisfies the inequalities conditions and it has zero entries.  By the previous case, the point $B$ lies inside the polytope and so the point $a$. Therefore, we can proceed with induction.

\end{proof}

\begin{lemma}\label{facets}
 Consider the connected lattice path matroid polytope $\Po({\Mo[P,Q]})$, where $P$ and $Q$ are paths from $(0,0)$ to $(m,r)$ so that
$P= E^{{\alpha}_1}N^{{\alpha}_2}\cdots N^{{\alpha}_{2l}}$ and also $Q=N^{{\beta}_1}E^{{\beta}_2}\cdots E^{{\beta}_{2s }}$.  The affine hull of this polytope is $x_1+\cdots+ x_{m+r}=r.$  

We have the following facets:
\begin{enumerate}
\item[$(a)$]  $x_1+\cdots+x_{{\beta}_1+\cdots+ {\beta}_{2k}}    \leq  {\beta}_1+ {\beta}_3+\cdots+{\beta}_{2k-1}$ for $1\leq k< s$.
\item[$(b)$] $x_1+\cdots+x_{{\alpha}_1+\cdots+ \alpha_{2k}}\geq {\alpha}_2+ {\alpha}_4+\cdots+{\alpha}_{2k}$  for $1 \leq k \leq l-1.$

\end{enumerate}

 In case ${\beta}_1>1$ and ${\alpha}_{2l}>1$, the facets in the affine hull $x_1+\cdots+ x_{m+r}=r$ can  be described as follows:

  \begin{enumerate} 
   \item $x_i\geq 0$  for $i=1,\ldots, m+r$, except for $i$'s so that we have the facets $x_1+\cdots+ x_i \geq j$ and $x_1+\cdots+x_{i-1}\leq j$ in $(a)$ and $(b)$ descriptions.

\item  $x_i\leq 1$ for $i=1, \ldots, m+r$,  except for $i$'s so that we have the facets $x_1+\cdots+ x_i \geq j$ and $x_1+\cdots+x_{i+1}\leq j+1$ in $(a)$ and $(b)$ descriptions.  
 \end{enumerate}
 In case ${\beta}_1=1$ and ${\alpha}_{2l}>1$, the facets in the affine hull $x_1+\cdots+ x_{m+r}=r$ can be described as follows:
  \begin{enumerate} 

\item $x_i\geq 0$ for all $i=1,\ldots, m+r$ except for $i$'s so that we have the facets $x_1+\cdots+ x_i \geq j$ and $x_1+\cdots+x_{i-1}\leq j$ in $(a)$ and $(b)$ descriptions.
 \item $x_i\leq 1$ for  all $i=1, \ldots, m+r$, except  $i\leq  1+ {\beta}_2$ and also for $i$'s so that we have the facets $x_1+\cdots+ x_i \geq j$ and $x_1+\cdots+x_{i+1}\leq j+1$ in $(a)$ and $(b)$ descriptions. 
 
  \end{enumerate}

 In case ${\beta}_1=1$ and ${\alpha}_{2l}=1$ the facets in the affine hull  can be described as follows:
 
  \begin{enumerate} 
\item $x_i\geq 0$ for $i=1,\ldots, m+r$, except for $i$'s so that both facets $x_1+\cdots+ x_i \geq j$ and $x_1+\cdots+x_{i-1}\leq j$ in the above descriptions $(a)$ and $(b)$.
 \item $x_i\leq 1$ for $i=1, \ldots, m+r$, except  $i\leq  1+ {\beta}_2$ and  for $i\geq  {\alpha}_1+\cdots+ {\alpha_{2l-2}} $.  and also for $i$'s so that we have the facets $x_1+\cdots+ x_i \geq j$ and $x_1+\cdots+x_{i+1}\leq j+1,$ in the above descriptions $(a)$ and $(b)$. 
  
  \end{enumerate}
  
   In case ${\beta}_1>1$ and ${\alpha}_{2l}=1$ the facets in the affine hull $x_1+\cdots+ x_{m+r}=r$  can be described as follows,

  \begin{enumerate} 
  
\item $x_i\geq 0$  for all $i=1,\ldots, m+r$ except for $i$'s so that both facets $x_1+\cdots+ x_i \geq j$ and $x_1+\cdots+x_{i-1}\leq j$ in the above descriptions $(a)$ and $(b).$
\item  $x_i\leq 1$ for all $i=1, \ldots, m+r$ except for $i$'s so that both facets $x_1+\cdots+ x_i \geq j$ and $x_1+\cdots+x_{i+1}\leq j+1$ in the above descriptions $(a)$ and $(b)$,
 and also for $i\geq   {\alpha}_1+\cdots+ {\alpha_{2l-2}} $.
 
 \end{enumerate}  
  \end{lemma}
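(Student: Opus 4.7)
The starting point is Lemma~\ref{hyperplane}, which represents $\Po(\Mo[P,Q])$ as the intersection of the hyperplane $x_1+\cdots+x_{m+r}=r$ with the inequalities $p_i\leq x_1+\cdots+x_i\leq q_i$ (for $1\leq i\leq m+r$) and $0\leq x_i\leq 1$. The entire task is to sift through this H-representation and discard the redundant inequalities. I would organize the proof into two blocks: first pruning the two families of path inequalities down to the statements (a) and (b), then checking which coordinate inequalities $x_i\geq 0$ and $x_i\leq 1$ survive.

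For the path inequalities I would argue by a simple telescoping. For the lower path: if position $i$ is an $E$-step of $P$, then $p_i=p_{i-1}$, so the constraint $x_1+\cdots+x_i\geq p_i$ is implied by the constraint at $i-1$ together with $x_i\geq 0$; and if positions $i,i+1$ both lie in the same $N$-run of $P$, then $p_{i+1}=p_i+1$, so the constraint at $i$ is implied by the constraint at $i+1$ together with $x_{i+1}\leq 1$. What remains are precisely the constraints at the ends of $N$-runs of $P$ (excluding the terminal one, which is absorbed by the sum equation), giving exactly family~(b). The mirror argument on $Q$ reduces the upper path inequalities to family~(a). To confirm non-redundancy of each of these, I would exhibit, for each $(a)$- or $(b)$-corner, a basis $B$ whose lattice path $P(B)$ touches the bounding path at that corner but not elsewhere; the connectedness hypothesis (Proposition~\ref{dsum}) guarantees enough room inside $[P,Q]$ for such paths to exist.

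Next I would examine the box inequalities. The inequality $x_i\geq 0$ is implied by existing facets precisely when there is an index $j$ such that $x_1+\cdots+x_{i-1}\leq j$ appears in family~(a) and $x_1+\cdots+x_i\geq j$ appears in family~(b), since then $x_i=(x_1+\cdots+x_i)-(x_1+\cdots+x_{i-1})\geq j-j=0$. An analogous subtraction handles $x_i\leq 1$ at interior positions, using a matching pair of $(b)$ and $(a)$ facets with values $j$ and $j+1$. For the boundary corrections: if $\beta_1=1$, then the first $(a)$-facet reads $x_1+\cdots+x_{1+\beta_2}\leq 1$, which together with non-negativity of the other coordinates forces $x_i\leq 1$ for every $i\leq 1+\beta_2$; symmetrically, if $\alpha_{2l}=1$, the last $(b)$-facet together with the sum equation and non-negativity forces $x_i\leq 1$ for every $i\geq \alpha_1+\cdots+\alpha_{2l-2}$. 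Combining these four observations produces exactly the four listed cases for $(\beta_1,\alpha_{2l})$. In the opposite direction, for every $x_i\geq 0$ (respectively $x_i\leq 1$) not in the excluded list, I would produce a basis in which step $i$ of $P(B)$ is North (respectively East) while all the surviving path facets are strict, proving the inequality is indeed a facet.

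\textbf{Main obstacle.} The conceptual content is light; the difficulty is bookkeeping. Tracking which corner of $P$ or $Q$ sits at which position, and aligning the subtracted inequalities so that the correct $x_i$ drops out, is an off-by-one minefield—indeed, the statement itself shifts some indices that a careful reader must reconcile with the telescoping identities above. The boundary cases when $\beta_1=1$ or $\alpha_{2l}=1$ are where this bookkeeping is hardest, because a corner of the path coincides with a corner of the unit box and the two types of redundancy overlap; I would work out a small explicit example, say $P=E^2N^2$ and $Q=NENE$, in parallel with the general argument to guard against indexing slips.
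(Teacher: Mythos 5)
Your proposal follows essentially the same route as the paper's proof: start from the H-representation of Lemma~\ref{hyperplane}, telescope away the redundant path inequalities to arrive at families $(a)$ and $(b)$, and then use the same four observations (the two subtraction identities for $x_i\geq 0$ and $x_i\leq 1$, plus the special implications when $\beta_1=1$ or $\alpha_{2l}=1$) to decide which box inequalities survive. If anything, your plan is slightly more complete than the paper's argument, since you sketch how to certify non-redundancy by exhibiting vertices, whereas the paper simply asserts that the resulting generating set is minimal.
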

  \begin{proof}
  
    Let us recall the fact that each polytope  is the intersection of a finite family of half spaces in its affine hull. The minimal such family determines the facets of polytope. The polytope  $\Po({\Mo[P,Q]})$ lies on the affine hull $x_1+\cdots+x_{m+r}=r$. So we only need to verify that  the polytope $\Po({\Mo[P,Q]})$ obtained by the described half spaces in the affine hull $x_1+\cdots+x_{m+r}=r$  and this set is minimal.
As we described in Lemma~\ref{hyperplane}, the polytope  $\Po({\Mo[P,Q]})$ can be described  as the intersection of the following hyperplanes,
\begin{enumerate}
\item $0 \leq x_i$ and $x_i \leq 1$ for $1 \leq i \leq m+r$,
\item  $x_1+\cdots+x_{{\beta}_1+\cdots+ {\beta}_{2k}+t}    \leq  {\beta}_1+ {\beta}_3+\cdots+{\beta}_{2k-1} +t$ for $1\leq k < s$ and $t \leq {\beta}_{2k+1}$, 
\item  $x_1+\cdots+x_{{\beta}_1+\cdots+ {\beta}_{2k-1}+t}   \leq  {\beta}_1+ {\beta}_3+\cdots+{\beta}_{2k-1}$ for $1\leq k \leq s$, where $t \leq {\beta}_{2k}$,


\item  $x_1+\cdots+x_{{\alpha}_1+\cdots+ {\alpha}_{{2k}-1}+t}\geq {\alpha}_2+ {\alpha}_4+\cdots+{\alpha}_{2k-2}+t$, where $0\leq t \leq {\alpha}_{2k}$ and $1\leq k \leq l$,
\item  $x_1+\cdots+x_{{\alpha}_1+\cdots+ {\alpha}_{{2k}-2}+t}\geq {\alpha}_2+ {\alpha}_4+\cdots+{\alpha}_{2k-2}$, where $0\leq t \leq {\alpha}_{2k-1}$ and   $1\leq k \leq l$. 
\end{enumerate}

It is easy to verify that the hyperplanes    $x_i\leq 1$ and $x_i \geq 0$ for $i=1,\ldots, m+r$, $x_1+\cdots+x_{{\beta}_1+\cdots+ {\beta}_{2k}}    \leq  {\beta}_1+ {\beta}_3+\cdots+{\beta}_{2k-1}$ for $1\leq k< s$ and $x_1+\cdots+x_{{\alpha}_1+\cdots+ {\alpha_{2k}}}\geq {\alpha}_2+ {\alpha}_4+\cdots+{\alpha}_{2k}$  for $1 \leq k \leq l-1$ generate all the hyperplanes stated above. In addition we have the following facts:
\begin{enumerate}
\item  The hyperplanes $x_1+\cdots+ x_i \geq j$ and $x_1+\cdots+x_{i-1}\leq j$ implies that $x_i\geq 0$, so we can omit  the hyperplane  $x_i\geq 0$ for such $i$'s.
\item  The hyperplanes $x_1+\cdots+ x_{i-1} \geq j$ and $x_1+\cdots+x_{i}\leq j+1$   implies that $x_i\leq 1$, so we can omit  the hyperplane  $x_i\leq 1$ for such $i$'s.
\item   In case $\beta_1=1$, the hyperplame  $x_1+\cdots+x_{1+{\beta}_2}\leq 1$ implies that $x_i\leq 1$ for $i\leq \beta_2+1.$
\item   In case $\alpha_{2l}=1$, the equality $x_1+\cdots+x_{m+r}=r$  implies that  $x_i\leq 1$ for $i\geq {\alpha}_1+\cdots+ {\alpha_{2l-2}} $.
\end{enumerate}
 
 Therefore, the hyperplanes stated on the theorem generate $\Po(\Mo[P,Q])$ in the affine hull $x_1+\cdots+x_{m+r}=r.$ It is not hard to verify that the generating set is minimal and none of hyperplanes generate by others.
\end{proof}
  
\begin{lemma}\label{catfaces}
The Catalan matroid polytope   
 $\Po(\Mo_{n+1})=\Po(\Mo[E^{n}N^{n}, (NE)^{n} ])$, for any $n\geq 2$, has
$5n-5$ facets which lies in the following hyperplanes:
 
\begin{itemize}
\item $x_3,\ldots, x_{2n-1}, x_{2n}\leq 1$,
\item $x_1,x_2,\ldots,x_{2n}\geq 0$,
\item $\sum_{i=1}^{2k-2}{x_{i}}\leq k-1$ for $2\leq k  \leq n$,
\end{itemize}
\end{lemma}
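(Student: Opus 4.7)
The plan is to specialize Lemma~\ref{facets} to the pair $P=E^{n}N^{n}$ and $Q=(NE)^{n}$. In the notation of that lemma, $P$ has $l=1$ with $\alpha_{1}=\alpha_{2}=n$, while $Q$ has $s=n$ with every $\beta_{i}=1$. Since $n\ge 2$ we are in the case $\beta_{1}=1$ and $\alpha_{2l}>1$. The type $(a)$ inequalities become
\begin{equation*}
x_{1}+\cdots+x_{\beta_{1}+\cdots+\beta_{2k}}=x_{1}+\cdots+x_{2k}\le k\qquad (1\le k\le n-1),
\end{equation*}
which, after setting $k\mapsto k-1$, is exactly $\sum_{i=1}^{2k-2}x_{i}\le k-1$ for $2\le k\le n$; this produces $n-1$ facets. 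Type $(b)$ contributes nothing since $1\le k\le l-1=0$ is empty.

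Next I read off the box inequalities from parts (1) and (2) of the relevant case of Lemma~\ref{facets}. Because there are no type $(b)$ facets and no $\ge$-type $(a)$ facets, the exception clause in (1) never triggers and the list initially contains $x_{i}\ge 0$ for every $i=1,\dots,2n$. For (2) the clause $i\le 1+\beta_{2}=2$ removes $x_{1}\le 1$ and $x_{2}\le 1$, leaving $x_{i}\le 1$ for $i=3,\dots,2n$; no further exceptions apply. Collecting everything, Lemma~\ref{facets} presents $(n-1)+2n+(2n-2)=5n-3$ hyperplanes.

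The last step is to show that exactly two of the $x_{i}\ge 0$ inequalities are redundant on the affine hull, bringing the count to $5n-5$. The key observation is that the type $(a)$ facet with $k=n-1$, namely $x_{1}+\cdots+x_{2n-2}\le n-1$, together with the affine hull equation $x_{1}+\cdots+x_{2n}=n$, forces $x_{2n-1}+x_{2n}\ge 1$. Combining this with $x_{2n-1}\le 1$ gives $x_{2n}\ge 0$, and combining with $x_{2n}\le 1$ gives $x_{2n-1}\ge 0$. Hence the inequalities $x_{2n-1}\ge 0$ and $x_{2n}\ge 0$ are implied by the others and supply no new facets, so the true facet count is $5n-3-2=5n-5$, and every facet lies in one of the listed hyperplanes.

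The main obstacle is certifying non-redundancy of the remaining $5n-5$ inequalities. For each one I would exhibit an affinely independent family of bases (lattice paths) saturating the inequality: for $x_{i}\ge 0$ with $i\le 2n-2$, paths with an East step at position $i$; for $x_{i}\le 1$ with $i\ge 3$, paths with a North step at position $i$; for $\sum_{i=1}^{2k-2}x_{i}\le k-1$, paths passing through the lattice point $(k-1,k-1)$. In each case one exhibits $\dim\Po(\Mo_{n+1})=2n-3$ affinely independent such paths, which is possible because the Catalan matroid has many bases and any two bases differing by a consecutive swap of an $E$ and an $N$ are connected by an edge of the polytope (Lemma~\ref{matedge}), yielding sufficient freedom on each face. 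This verification is routine but case-heavy.
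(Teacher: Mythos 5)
Your proposal is correct, and it supplies an argument where the paper gives none: Lemma~\ref{catfaces} is stated without proof, evidently as a bare specialization of Lemma~\ref{facets}. Your reading of the data ($l=1$, $\alpha_1=\alpha_2=n$, $s=n$, all $\beta_i=1$, hence the case $\beta_1=1$, $\alpha_{2l}>1$) is accurate, and so is the resulting list: $n-1$ inequalities of type $(a)$, none of type $(b)$, $x_i\ge 0$ for all $i$, and $x_i\le 1$ for $i\ge 3$ --- that is, $5n-3$ hyperplanes against the claimed $5n-5$ facets. The essential content of your proof is the observation that $x_{2n-1}\ge 0$ and $x_{2n}\ge 0$ are redundant, via $\sum_{i=1}^{2n-2}x_i\le n-1$ together with the affine hull and the bounds $x_{2n}\le 1$ and $x_{2n-1}\le 1$ respectively; this is exactly right and is the only way to reconcile the two counts. (For $n=2$ the polytope is a pyramid over a square with $5$ vertices and $5$ facets, and the faces $x_3=0$ and $x_4=0$ are edges rather than facets, confirming your identification.) Note that this shows the exception clauses of Lemma~\ref{facets} in the case $\beta_1=1$, $\alpha_{2l}>1$ are incomplete: they do not remove $x_{2n-1}\ge 0$ or $x_{2n}\ge 0$, so a purely mechanical application of that lemma yields the wrong facet count here, and your redundancy step is genuinely needed. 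The one piece you leave only sketched is the certification that the surviving $5n-5$ inequalities all support facets; that verification is routine and is in any case no less detailed than the paper's own ``it is not hard to verify that the generating set is minimal.''
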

  
 \begin{lemma}\label{kcatfaces}
The Generalized Catalan matroid polytope  

 $\Po(\Mo^{r}_n)=\Po(E^{r(n-1)}N^{n-1}, (NE^r)^{n-1})$, for any $n\geq 2$ has $(r+1)(2n-3)+n-2$ facets which lies in the following hyperplanes:

\begin{itemize}
\item $x_{r+2},\ldots, x_{(r+1)(n-1)}\leq 1$,
\item $x_1,x_2,\ldots,x_{(r+1)(n-1)}\geq 0$,
\item $\sum_{i=1}^{k(r+1)}{x_{i}}\leq k$ for $1\leq k  \leq n-2$.
\end{itemize}
\end{lemma}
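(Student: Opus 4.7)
The plan is to apply Lemma~\ref{facets} to the concrete paths $P = E^{r(n-1)} N^{n-1}$ and $Q = (NE^r)^{n-1}$ that realize $\Mo_n^r$. Writing $P = E^{\alpha_1} N^{\alpha_2}$ I read off $l = 1$, $\alpha_1 = r(n-1)$, $\alpha_2 = n-1$; writing $Q = N^{\beta_1} E^{\beta_2} \cdots E^{\beta_{2s}}$ I get $s = n-1$ with $\beta_{2i-1} = 1$ and $\beta_{2i} = r$ for $1 \le i \le n-1$. In particular $\beta_1 = 1$ and $\alpha_{2l} = n-1$, so for $n \ge 3$ the case ``$\beta_1 = 1$ and $\alpha_{2l} > 1$'' of Lemma~\ref{facets} is in force, while $n = 2$ falls into the case ``$\beta_1 = 1$ and $\alpha_{2l} = 1$''; I would dispatch $n = 2$ as a separate sanity check against the standard simplex on $\{e_1, \ldots, e_{r+1}\}$.

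Next I would substitute these parameters into items $(a)$ and $(b)$ of Lemma~\ref{facets}. The prefix length in $(a)$ becomes $\beta_1 + \cdots + \beta_{2k} = k(r+1)$ and the right-hand side collapses to $\beta_1 + \beta_3 + \cdots + \beta_{2k-1} = k$, giving exactly the $n-2$ facets $\sum_{i=1}^{k(r+1)} x_i \le k$ for $1 \le k \le n-2$. The parameter range of $(b)$ is $1 \le k \le l - 1 = 0$ and is therefore empty, so no $(b)$-type facets contribute.

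For the box constraints I would invoke the exclusion rules of Lemma~\ref{facets}. Because no $(b)$-facet exists, no index $i$ can simultaneously satisfy the conjunctions needed to remove any $x_i \ge 0$ or $x_i \le 1$ via the generic rule, so all $(r+1)(n-1)$ lower bounds $x_i \ge 0$ survive as facets. For the upper bounds, the special clause tied to $\beta_1 = 1$ removes $x_i \le 1$ precisely for $i \le 1 + \beta_2 = r+1$, leaving $x_{r+2}, \ldots, x_{(r+1)(n-1)} \le 1$. A final tally yields
\[
(n-2) + (r+1)(n-1) + \bigl[(r+1)(n-1) - (r+1)\bigr] = (r+1)(2n-3) + n - 2,
\]
as claimed.

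The one nontrivial point is bookkeeping: I would need to parse the nested ``except for'' clauses of Lemma~\ref{facets} carefully so that no facet is double counted and, in the $n = 2$ case, confirm that the additional exclusion triggered by $\alpha_{2l} = 1$ is consistent with the polytope being a simplex. A quick sanity check through Propositions~\ref{dimformula} and~\ref{dsum} --- noting that $P$ and $Q$ meet only at $(0,0)$ and $(r(n-1), n-1)$ --- confirms $\Po(\Mo_n^r)$ is connected of dimension $(r+1)(n-1) - 1$, matching the ambient affine hull $\sum_i x_i = n-1$ in which the enumerated hyperplanes cut out facets.
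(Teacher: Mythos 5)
Your proposal is correct and follows exactly the route the paper intends: the paper states this lemma without proof as an immediate specialization of Lemma~\ref{facets}, and your substitution ($l=1$, $\beta_{2i-1}=1$, $\beta_{2i}=r$, so that the $(b)$-facets vanish, the $(a)$-facets become $\sum_{i=1}^{k(r+1)}x_i\leq k$, and the $\beta_1=1$ clause removes $x_i\leq 1$ for $i\leq r+1$) is the intended derivation, with the tally $(n-2)+(r+1)(n-1)+\bigl[(r+1)(n-1)-(r+1)\bigr]=(r+1)(2n-3)+n-2$ checking out. Your separate treatment of $n=2$ via the simplex case is a sensible extra care the paper does not spell out.
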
 
  
   


  
  \begin{theorem}\label{faces}
All the faces of lattice path matroid polytope are lattice path matroid polytopes.
\end{theorem}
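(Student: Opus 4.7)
The plan is to proceed by induction on $\dim \Po(\Mo[P,Q])$. The base case $\dim = 0$ is trivial, as the polytope is then a single vertex. For the inductive step, every proper face of a polytope is contained in some facet, and every face of a facet is itself a face of the ambient polytope, so it suffices to prove that each facet of $\Po(\Mo[P,Q])$ is a lattice path matroid polytope and then invoke the induction hypothesis.

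Lemma \ref{facets} groups the facets into two essentially different families: coordinate facets $\{x_i = 0\}$ and $\{x_i = 1\}$, and cumulative-sum facets
\[
x_1 + \cdots + x_j \;=\; \beta_1 + \beta_3 + \cdots + \beta_{2k-1}
\qquad\text{or}\qquad
x_1 + \cdots + x_j \;=\; \alpha_2 + \alpha_4 + \cdots + \alpha_{2k}.
\]
For a cumulative-sum facet, a basis $B$ lies on the facet exactly when the path $P(B)$ passes through a prescribed common vertex $v$ of $P$ and $Q$. Such paths are in bijection with pairs consisting of a path from $(0,0)$ to $v$ bounded by the initial segments of $P$ and $Q$ together with a path from $v$ to $(m,r)$ bounded by the terminal segments. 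This yields a direct-sum decomposition of the matroid associated to the facet as $\Mo[P_1, Q_1] \oplus \Mo[P_2, Q_2]$, and the facet polytope is accordingly $\Po(\Mo[P_1, Q_1]) \times \Po(\Mo[P_2, Q_2])$, which is a lattice path matroid polytope by Proposition \ref{dsum}.

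For a coordinate facet $\{x_i = 0\}$, the vertices are the incidence vectors of bases with $i \notin B$, that is, of paths in $[P,Q]$ whose $i$-th step is East. Deleting this East step gives a bijection with the paths from $(0,0)$ to $(m-1, r)$ in a new bounded region $[P', Q']$ obtained from $[P, Q]$ by removing the $i$-th step of each boundary path; exactly how this removal is carried out depends on which of $P$ and $Q$ has an $E$ (or an $N$) at position $i$, but the non-degeneracy conditions built into Lemma \ref{facets} ensure the construction is well-defined in every subcase. After dropping the identically-zero coordinate, the facet equals $\Po(\Mo[P', Q'])$. The facet $\{x_i = 1\}$ is handled symmetrically by removing a North step.

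The main obstacle is the case analysis for the coordinate facets: one must pair each local configuration of $P$ and $Q$ near step $i$ with the correct ``shortened'' paths $(P', Q')$ and verify the bijection of bases in each subcase. An alternative shortcut is to observe that $\{x_i = 0\} \cap \Po(\Mo[P,Q])$ is (up to the dropped coordinate) the matroid polytope of the deletion $\Mo[P,Q] \setminus i$, and $\{x_i = 1\}$ that of the contraction $\Mo[P,Q] / i$; once one knows that the class of lattice path matroids is closed under single-element deletion and contraction (a result of Bonin and de Mier), the coordinate-facet case follows without any additional bookkeeping, and the induction closes.
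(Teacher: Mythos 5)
Your proposal is correct and follows essentially the same route as the paper: reduce to facets, identify the cumulative-sum facets with direct sums of two smaller lattice path matroids at a common vertex of $P$ and $Q$, and identify the coordinate facets with single-element minors, invoking the closure of lattice path matroids under deletion and contraction from Bonin--de Mier. You are in fact slightly more careful than the paper, which makes the face-to-facet induction only implicit and describes the $x_i=1$ facet as a deletion where contraction is the precise operation.
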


\begin{proof}
 Without lost of generality, we may assume that the lattice path matroid polytope $\Po({\Mo[P,Q]})$ is connected, where $P$ and $Q$ are paths from $(0,0)$ to $(m,r)$, so that $P= E^{{\alpha}_1}N^{{\alpha}_2}\cdots N^{{\alpha}_{2k}}$ and also $Q=N^{{\beta}_1}E^{{\beta}_2}\cdots E^{{\beta}_{2s}}$.

We wish to show that  all the facets of this polytope are the lattice path matroid polytopes.
Clearly, the vertices in facet of the form $x_1+\cdots+x_i\leq q_i $, $x_1+\cdots+x_i\geq p_i$ are the paths which go through the $i$th vertex of the paths $Q$ and $P$, respectively. Thus, these facets are the lattice path matroid polytopes which are direct sums of two other lattice path matroid polytopes. We only need to show that facets which obtain by equalities $x_i=0$ and $x_i=1$ are also the  lattice path matroid polytopes.

Consider the facet $x_i=1$ of the polytope. The vertices of this facet are paths with $N$ step on their $i$th step. We just delete $i$th element  of the matroid $\Mo[P,Q]$ and as discussed in~\cite{B2} the resulting matroid is a lattice path matroid.  See in Figure~\ref{htp}.  
 The result is lattice path matroid associated to this facet. 
Similarly, the vertices with $x_i=0$, form a lattice path matroid polytope.
\end{proof}

\begin{figure}\label{facet}
\centering
\includegraphics[height=2cm]{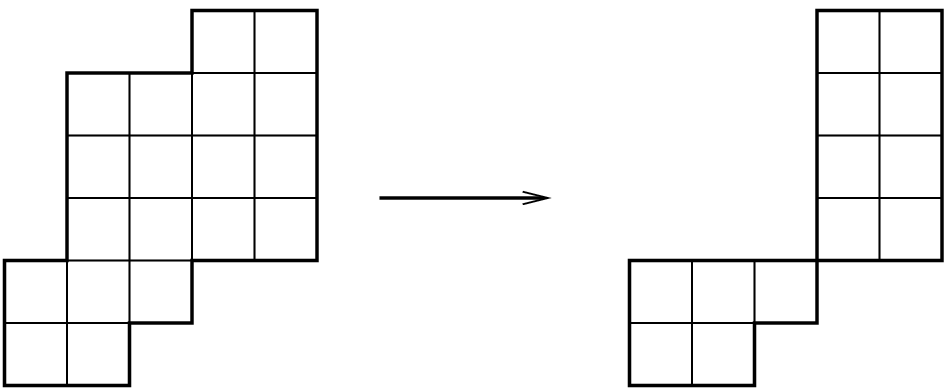}
\caption{Faces of lattice path matroid polytope}
\end{figure}

\begin{figure}\label{htp}
\centering
\includegraphics[height=3.5cm]{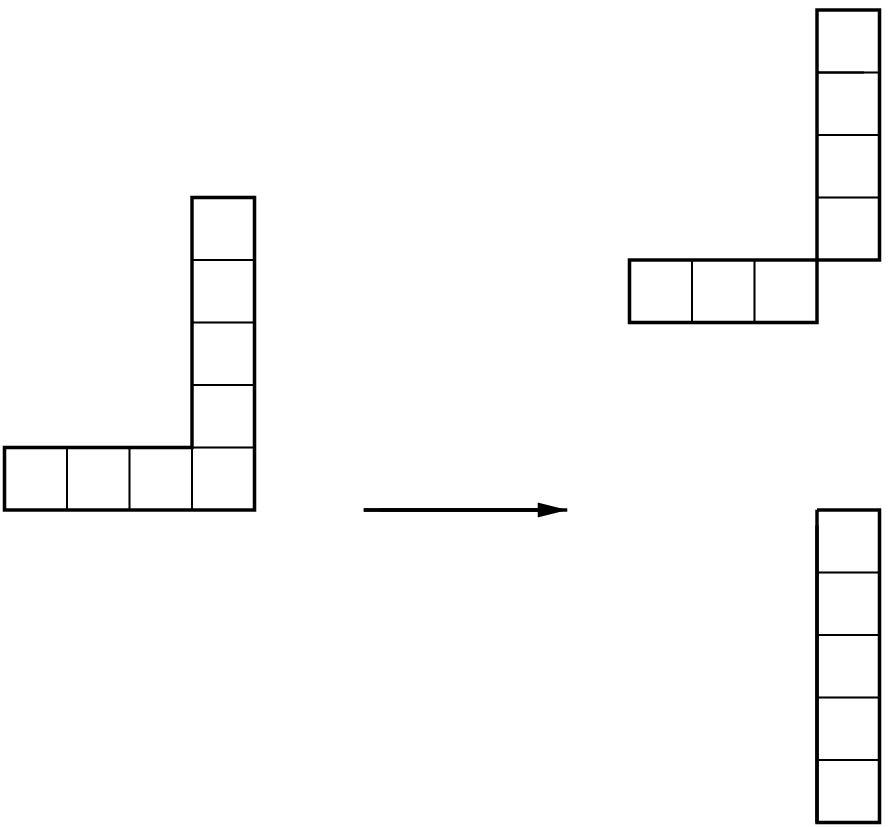} 
\caption{Faces of lattice path matroid polytope}
\end{figure}

\section{Decomposition of Lattice Path Polytope}

 In this section, we study the decomposition of the lattice path matroid polytope into  lattice path matroid polytopes.
 
 Billera, Jia and Reiner~\cite{BJ} defined a matroid polytope decomposition of $\Po(\Mo)$ to
be a decomposition $\Po(\Mo) = \bigcup_{i=1}^{t} \Po(\Mo_i)$ where each
$\Po(\Mo_i)$ is also a matroid  polytope for some matroid $\Mo_i$ and all $\Po(\Mo_i)$'s have the same dimension as $\Po(\Mo)$.
and for each $1\leq i\neq j\leq t$, the intersection $\Po(\Mo_i)\cap
\Po(\Mo_j)$ is a face of both $\Po(\Mo_i)$ and $\Po(\Mo_j )$. The polytope $\Po(\Mo)$ is
said to be decomposable if it has a matroid polytope
decomposition for $t\geq 2$, and it is  indecomposable otherwise.

 A
decomposition is called hyperplane split if $t = 2$. We notice that if $\Po(\Mo) = \Po(\Mo_1)\cup
\Po(\Mo_2)$ is a nontrivial hyperplane split then $\Po(\Mo_1)\cap
\Po(\Mo_2)$ must be a facet of both $\Po(\Mo_1)$ and $\Po(\Mo_2)$, and the
dimension of $\Po(\Mo_i)$ for $i = 1, 2$ is the same as that of $\Po
(\Mo)$.

Let $\Mo =(\mathcal{E}, \mathcal{B})$ be a matroid of rank $r$ and let $A \subseteq \mathcal{E}$. We
recall that the independent set of the restriction matroid of $\Mo$ to
$A$, denoted by $\Mo|_A$, is given by $\I(\Mo|_A) = \{I\subseteq A :
I\in \I(\Mo)\}$. Let $(\mathcal{E}_1, \mathcal{E}_2)$ be a partition of $\mathcal{E}$, that is,
$\mathcal{E}=\mathcal{E}_1\cup \mathcal{E}_2$ and $\mathcal{E}_1\cap \mathcal{E}_2 = \emptyset$. Let $r_i > 1$, $i = 1, 2$
be the rank of $\Mo|_{\mathcal{E}_i}$. We say that $(\mathcal{E}_1,\mathcal{E}_2)$ is a \emph{good
partition} if there exist integers $0 < a_1 < r_1$ and $0 < a_2 <
r_2$ with the following properties:

\begin{itemize}
\item[$(P1)$] $r_1 + r_2 = r + a_1 + a_2$
\item[$(P2)$] For all $X \in \I(\Mo|_{\mathcal{E}_1})$ with
$|X| \leq r_1 - a_1$ and all $Y \in \I(\Mo|_{\mathcal{E}_2})$ with $|Y|  \leq  r_2
- a_2$, we have $X \cup Y \in \I(\Mo)$.
\end{itemize}

\begin{lemma}[Alfonsin, Chatelain]\label{good}
Let $\Mo =(\mathcal{E}, \mathcal{B})$ be a matroid of rank $r$ and let $(\mathcal{E}_1,\mathcal{E}_2)$ be a
good partition of E. Let $\B(\Mo_1) = \{B\in \B(\Mo) : |B \cap \mathcal{E}_1|\leq
r_1 - a_1\}$  and $\B(\Mo_2)= \{B\in \B(\Mo) : |B\cap \mathcal{E}_2|\leq r_2 -
a_2\}$. Where $r_i$ is the rank of matroid $\Mo|_{\mathcal{E}_i}$ for $i = 1, 2$
and $a_1$ and $a_2$ are integers verifying properties $(P1)$ and $(P2)$.
Then $\B(\Mo_1)$ and $\B(\Mo_2)$ are the collections of bases of
matroids $\Mo_1$ and $\Mo_2$, respectively. As a conclusion, $\Po(\Mo)=\Po(\Mo_1)\cup \Po(\Mo_2)$ is a 
hyperplane split.
\end{lemma}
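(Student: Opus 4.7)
The plan is to isolate the splitting hyperplane
$$H : \sum_{i \in \mathcal{E}_1} x_i = r_1 - a_1,$$
which, by $(P1)$ and the fact that $\sum_i x_i = r$ on the affine hull of $\Po(\Mo)$, is equivalent to $\sum_{i \in \mathcal{E}_2} x_i = r_2 - a_2$. The easy bookkeeping comes first: for any basis $B$ of $\Mo$, the identity $|B\cap\mathcal{E}_1|+|B\cap\mathcal{E}_2|=r$ together with $(P1)$ shows that $|B\cap\mathcal{E}_1|>r_1-a_1$ forces $|B\cap\mathcal{E}_2|<r_2-a_2$, so $\B(\Mo)=\B(\Mo_1)\cup\B(\Mo_2)$, with overlap exactly the bases lying on $H$.

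The core step is to verify that $\B(\Mo_1)$ satisfies the basis exchange axiom, from which $\Mo_1$ is a matroid; $\B(\Mo_2)$ follows by a symmetric argument. Given $B,B'\in\B(\Mo_1)$ and $e\in B\setminus B'$, basis exchange in $\Mo$ produces some $f\in B'\setminus B$ with $B-e+f\in\B(\Mo)$, but one must also ensure $|(B-e+f)\cap\mathcal{E}_1|\le r_1-a_1$. If $e\in\mathcal{E}_1$ the count can only stay the same or decrease, so any $f$ works; if $e\in\mathcal{E}_2$ with $|B\cap\mathcal{E}_1|<r_1-a_1$ the count rises by at most one and still stays within bound. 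The delicate case is
$$e\in\mathcal{E}_2, \qquad |B\cap\mathcal{E}_1|=r_1-a_1,$$
where $f$ must itself lie in $\mathcal{E}_2$. Here $(P1)$ gives $|B\cap\mathcal{E}_2|=r_2-a_2$ and $|B'\cap\mathcal{E}_2|\ge r_2-a_2$, so I would apply the augmentation property inside the restriction $\Mo|_{\mathcal{E}_2}$ to the independent set $(B\cap\mathcal{E}_2)-e$ of size $r_2-a_2-1$ and the larger independent set $B'\cap\mathcal{E}_2$, obtaining $f\in(B'\cap\mathcal{E}_2)\setminus(B\cap\mathcal{E}_2)$ with $(B\cap\mathcal{E}_2)-e+f\in\I(\Mo|_{\mathcal{E}_2})$ of size $r_2-a_2$. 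Property $(P2)$ then lifts this to
$$(B\cap\mathcal{E}_1)\cup\bigl((B\cap\mathcal{E}_2)-e+f\bigr)=B-e+f\in\I(\Mo),$$
which, having size $r$, is a basis of $\Mo$ and belongs to $\B(\Mo_1)$ by construction.

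With $\Mo_1$ and $\Mo_2$ identified as matroids, the decomposition follows by comparing vertex sets. Every vertex of $\Po(\Mo)$ is a vertex of some $\Po(\Mo_i)$, so $\Po(\Mo)=\Po(\Mo_1)\cup\Po(\Mo_2)$. Each $\Po(\Mo_i)$ lies inside its respective halfspace cut out by $H$ (this holds at vertices and passes to the convex hull), so $\Po(\Mo_1)\cap\Po(\Mo_2)\subseteq\Po(\Mo)\cap H$; conversely any basis on $H$ lies in both $\B(\Mo_i)$, giving equality. This common face is a facet of each, since the assumption $0<a_i<r_i$ guarantees bases strictly off $H$ on either side and hence both $\Po(\Mo_i)$ are full-dimensional.

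The main obstacle is the boundary case of the basis exchange: blind use of exchange in $\Mo$ may return an $f\in\mathcal{E}_1$ that pushes $|B\cap\mathcal{E}_1|$ above the threshold $r_1-a_1$. What saves the argument is that $(P2)$ lets one perform the exchange inside the smaller matroid $\Mo|_{\mathcal{E}_2}$ and then lift it back to $\Mo$, keeping the exchange partner inside $\mathcal{E}_2$ by construction.
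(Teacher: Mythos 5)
The paper states Lemma~\ref{good} without proof, importing it from the cited work of Alfonsin and Chatelain, so there is no in-paper argument to compare against; your proposal has to stand on its own. Its combinatorial half does: the case analysis verifying the basis exchange axiom for $\B(\Mo_1)$ is correct and complete, and the delicate case ($e\in\mathcal{E}_2$ with $|B\cap\mathcal{E}_1|=r_1-a_1$) is handled exactly right --- augment inside the restriction $\Mo|_{\mathcal{E}_2}$ so that the exchange partner stays in $\mathcal{E}_2$, then use $(P2)$ to lift the result back to an independent set of $\Mo$ of size $r$. That is the real content of the lemma, and it is where $(P1)$ and $(P2)$ earn their keep. (A very minor omission: you should also note $\B(\Mo_1)\neq\emptyset$, which follows by applying $(P2)$ to independent sets of sizes exactly $r_1-a_1$ and $r_2-a_2$.)

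The gap is in the final geometric step. From ``every vertex of $\Po(\Mo)$ is a vertex of some $\Po(\Mo_i)$'' you conclude $\Po(\Mo)=\Po(\Mo_1)\cup\Po(\Mo_2)$, but this inference is invalid for general polytopes: $\conv(V_1\cup V_2)$ can strictly contain $\conv(V_1)\cup\conv(V_2)$ even when $V_1$ and $V_2$ lie in opposite closed halfspaces of $H$. (Cut the unit square by $x_1+x_2=\tfrac{1}{2}$: only one vertex lies weakly below, yet the lower piece of the square is a full triangle.) What you actually need is $\Po(\Mo)\cap\bigl\{\sum_{i\in\mathcal{E}_1}x_i\le r_1-a_1\bigr\}=\Po(\Mo_1)$, i.e.\ that the cut creates no vertices that are not already vertices of $\Po(\Mo)$; the same issue recurs when you identify $\Po(\Mo_1)\cap\Po(\Mo_2)$ with $\Po(\Mo)\cap H$. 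The missing ingredient is that no edge of $\Po(\Mo)$ crosses $H$ strictly: by Lemma~\ref{matedge}, adjacent vertices satisfy $e_{B_1}-e_{B_2}=e_i-e_j$, so the functional $x\mapsto\sum_{i\in\mathcal{E}_1}x_i$ is integer-valued on vertices and changes by at most $1$ along every edge, hence cannot pass from strictly below the integer $r_1-a_1$ to strictly above it along an edge. With that observation every vertex of $\Po(\Mo)\cap\bigl\{\sum_{i\in\mathcal{E}_1}x_i\le r_1-a_1\bigr\}$ is a vertex of $\Po(\Mo)$ lying in $\B(\Mo_1)$, and your remaining assertions (the union, the intersection being the common face on $H$, and full dimensionality of both pieces from the existence of bases strictly on each side of $H$) all go through.
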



In the following lemma we use the work of Alfonsin and Chatelain to explain the lattice path matroid polytope decompositions.
\begin{lemma}\label{transdecom}
Let $\Mo[P,Q]$ be a lattice path matroid the transversal matroid on $\{1,\ldots,m+ r\}$ and
presentation $(N_i : i \in \{1,\ldots,r\})$ where $N_i$ denotes the
interval $[s_i, t_i]$ of integers. Suppose that there exists integer
$x$ such that $s_j < x < t_j$ and $s_{j+1} < x + 1 < t_{j+1}$ for
some $1\leq j \leq r - 1$. Then, $\Po(\Mo[P,Q])$ has a nontrivial
hyperplane split. In fact this decompose the lattice path matroid polytope  $\Po(\Mo[P,Q])$ into two lattice path matroid polytopes  $\Po(\Mo[P,Q_1])$ and  $\Po(\Mo[P_1,Q)$ \end{lemma}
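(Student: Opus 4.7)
The plan is to exhibit two new lattice paths $Q_1$ and $P_1$ such that the decomposition $\Po(\Mo[P,Q]) = \Po(\Mo[P,Q_1]) \cup \Po(\Mo[P_1,Q])$ is a hyperplane split. First I would translate the hypothesis into a statement about the heights $p_i, q_i$ of $P$ and $Q$ at step $i$: the inequality $t_j \ge x+1$ forces $p_x \le j-1$ and $s_{j+1} \le x$ forces $q_x \ge j+1$, so $q_x - p_x \ge 2$ and one can pick an integer $c$ with $p_x < c < q_x$. The separating hyperplane will be $\sum_{i=1}^x x_i = c$, and I would define
\[
q_1(i) = \begin{cases} \min(q_i, c) & \text{if } i \le x, \\ \min(q_i, c + (i-x)) & \text{if } i \ge x, \end{cases} \qquad p_1(i) = \begin{cases} \max(p_i, c - (x-i)) & \text{if } i \le x, \\ \max(p_i, c) & \text{if } i \ge x. \end{cases}
\]
Both $q_1$ and $p_1$ are non-decreasing with unit increments, hence describe valid lattice paths from $(0,0)$ to $(m,r)$, each passing through the lattice point $(x-c, c)$, and one checks directly that $P \le Q_1 \le Q$ and $P \le P_1 \le Q$.

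The key technical step is to show that $Q_1$ meets $P$ only at $(0,0)$ and $(m,r)$, and symmetrically for $P_1$ and $Q$. For $i \le x$ one has $p_i \le p_x < c$; so either $q_i \ge c$ and then $q_1(i) = c > p_i$, or $q_i < c$ and then $q_1(i) = q_i$, which equals $p_i$ only when $p_i = q_i$, and this happens only at the endpoints since $\Mo[P,Q]$ is connected by Proposition~\ref{dsum}. For $i \ge x$ the monotonicity $p_i - p_x \le i - x$ yields $p_i \le p_x + (i-x) < c + (i-x)$, again ruling out equality with $q_1(i)$ except at endpoints. Consequently $\Mo[P, Q_1]$ and $\Mo[P_1, Q]$ are connected lattice path matroids, so by Proposition~\ref{dsum} their polytopes share the dimension $m+r-1$ of $\Po(\Mo[P, Q])$.

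Finally, by the monotonicity of lattice paths, a $0/1$ vector $B$ lies in the region $[P, Q_1]$ iff $B \in [P, Q]$ and its partial sum at position $x$, namely $\sum_{i=1}^x x_i$, is at most $c$; dually for $[P_1, Q]$. Combined with Lemma~\ref{hyperplane}, this identifies $\Po(\Mo[P, Q_1])$ and $\Po(\Mo[P_1, Q])$ with the two closed halves of $\Po(\Mo[P, Q])$ cut by the hyperplane $\sum_{i=1}^x x_i = c$, whose intersection is the common facet consisting of the paths through $(x-c, c)$, a face of both pieces. The main obstacle is the connectedness verification in the preceding paragraph: if the new bounding path $Q_1$ shared an interior lattice vertex with $P$, then $\Mo[P, Q_1]$ would split as a direct sum, the dimension of $\Po(\Mo[P, Q_1])$ would drop, and the decomposition would fail to be a hyperplane split in the sense required.
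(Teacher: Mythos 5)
Your proof is correct, and it reaches the conclusion by a genuinely different route from the paper. The paper's proof runs through the Alfonsin--Chatelain machinery: it takes the ground-set partition $\mathcal{E}_1=\{1,\ldots,x\}$, $\mathcal{E}_2=\{x+1,\ldots,m+r\}$, checks that it is a good partition so that Lemma~\ref{good} delivers the hyperplane split at the level of matroid bases, describes the two resulting matroids as transversal matroids with truncated presentations $\overline{N_i}^1,\overline{N_i}^2$, and only then identifies them as $\Mo[P_1,Q]$ and $\Mo[P,Q_1]$ by listing the positions of the North steps of $P_1$ and $Q_1$. You instead bypass the transversal-matroid formalism entirely: you translate the hypothesis into the height-function inequality $q_x - p_x \ge 2$, name the cutting hyperplane $\sum_{i\le x}x_i=c$ explicitly, build $Q_1$ and $P_1$ as pointwise $\min$/$\max$ of height functions, and verify directly from the inequality description of Lemma~\ref{hyperplane} that the two half-polytopes are exactly $\Po(\Mo[P,Q_1])$ and $\Po(\Mo[P_1,Q])$. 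Your route is more self-contained and makes the geometry of the cut transparent; it also supplies the dimension/connectedness check (via Proposition~\ref{dsum}) that the split is nontrivial, a point the paper's proof leaves implicit. What the paper's route buys is the matroid-level statement: Lemma~\ref{good} certifies that the two pieces are matroid base families before any lattice-path interpretation, which situates the result inside the general theory of matroid base polytope decompositions. One small caveat: your connectedness paragraph assumes $\Mo[P,Q]$ is connected, which the lemma as stated does not; but your own computation already shows that $Q_1$ meets $P$ (and $P_1$ meets $Q$) only where $Q$ meets $P$, so the component count, hence the dimension, is preserved without that assumption.
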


\begin{figure}[htp1]
\centering
\includegraphics[height=5cm]{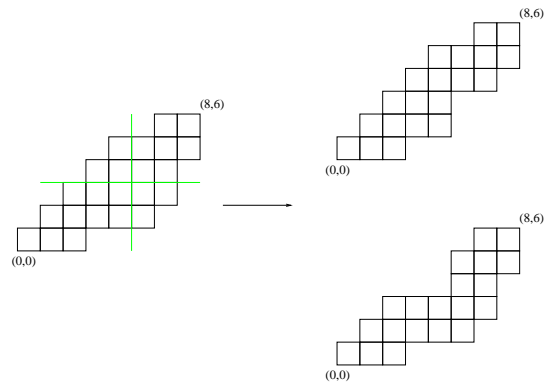}
\caption{Decompositions of matroid polytopes}
\end{figure}

\begin{proof}
We can consider lattice path matroid as a transversal matroid on
$\{1,...,m+ r\}$ and presentation $(N_i : i \in \{1,\cdots,r\})$ where
$N_i$ denotes the interval $[s_i, t_i]$ of integers, were $s_i$ and
$t_i$ are the location of $i$th $N$ on paths $P$ and $Q$
respectively. Since the region of $\Mo[P,Q]$ is not a border strip and
this is not a border strip lattice path matroid, there exists integer $x$
such that $s_j < x < t_j$ and $s_{j+1} < x + 1 < t_{j+1}$ for some
$1\leq j \leq r - 1$.  Set $\mathcal{E}_1 = \{1,
\cdots, x\}$ and $\mathcal{E}_2 = \{x+1,\cdots ,m+r\}$. The partition
$(\mathcal{E}_1,\mathcal{E}_2)$ verifies property $(P1)$ by taking integers $a_1$ and
$a_2$ such that $r_1-a_1= $ and $r_2-a_2=r-j$. Moreover, the sets
$\B(\Mo_1)= \{\B \in \mathcal{\B}(\Mo):|\B \cap \mathcal{E}_1|\leq r_1-a_1\}$ and
$\B(\Mo_2)= \{B \in {\B}(\Mo) :|\B \cap \mathcal{E}_2|\leq r_2-a_2\}$ are the
collections of bases of matroids $\Mo_1$ and $\Mo_2$ respectively.
Indeed, $\Mo_1$ is the transversal matroid with representation $({
\overline{N_i}}^1:i\in \{1, \cdots , r\})$ where ${
\overline{N_i}}^1= N_i$ for each $i = 1, \cdots, j$ and ${
\overline{N_i}}^1 = N_i\cap \mathcal{E}_2$ for each $i = j+1, \cdots, r$.
$\Mo_2$ is the transversal matroid with representation
$({\overline{N_i}}^2: i \in \{1,\cdots, r\})$ where ${
\overline{N_i}}^2 = N_i\cap \mathcal{E}_1$ for each $i = 1,\cdots,j$ and ${
\overline{N_i}}^1 = N_i$ for each $i = j + 1,\cdots, r$. Finally,
$\Mo_1 \cap \Mo_2$ is the transversal matroid with representation $(
\overline{N_i} : i\in \{1,\cdots, r\})$ where $\overline{N_i} = {
\overline{N_i}}^1 \cap { \overline{N_i}}^2 $ for each $i = 1,\cdots
, r$.

Let us consider $\Mo_1$ and $\Mo_2$ as lattice path matroid polytopes.

Consider the point $(h+1-j,j)$ on the region of lattice path matroid. It is not hard to see that $\Mo_1$ and $\Mo_2$ are the lattice
path matroids. The region of $\Mo_1$ is obtained by removing boxes of
$\Mo[P,Q]$ which are above the lines $y=j$ and on the left hand side
of the line $x=h+1-j$. The region of $\Mo_2$ is also obtained by
removing the boxes which are on the right hand side of the vertical
line $x=x-j$ and below $y=j.$

Let $k$ be the least positive integer so that $t_k -k\geq x-j$. Let
$P_1$ be the path of length $m+r$ with the following set of $r$ $N$
steps: $\{t_1, \ldots,t_j, m,m+1, \ldots,m+k-j, t_k, \ldots,t_r \}$. Let
be a path $Q_1$ of length $m+r$ with $r$ north steps $\{s_1, \ldots,
s_l, m-l+k, \ldots,m, s_{k+1}, \ldots, s_r \}$ where $l$ is the greatest
element so that $s_{l}-l\leq m-j$. It is easy to see that
$\Mo_1=\Mo[P_1,Q]$ and $\Mo_2=\Mo[P,Q_1]$. See Figure~\ref{htp1}.

\end{proof}

\begin{definition}
Let $\Mo[P,Q]$ be a connected lattice path matroid so that the region
between $P$ and $Q$ is a connected border strip and let $p$ be a path
whose vertices are boxes of border strip and its edges are connected
boxes. We call $\Mo[P,Q]$  a \emph{border strip matroid} and we denote it
by $S(p).$
\end{definition}

\begin{figure}\label{border}
\centering
\includegraphics[height=3.5cm]{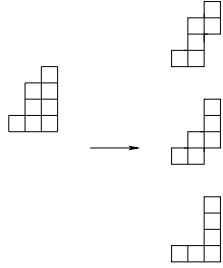}
\caption{Decompositions of matroid polytope to border strips}
\end{figure}

\begin{lemma}\label{dec}
Let $\Po(\Mo[P,Q])$ be a  connected lattice path matroid of rank $r$ on
$\{1,\ldots,m+ r\}$  which is not a border strip matroid. It can be
decomposed into connected lattice path matroid polytope using
hyperplane split. Moreover, $\Po(\Mo[P,Q])$ can be decomposed into $\Po(S(p))$ where $p$ ranges over all paths contained in
$\Mo[P,Q]$.
\end{lemma}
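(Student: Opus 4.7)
The plan is to apply the hyperplane split of Lemma~\ref{transdecom} iteratively, peeling off splits until every remaining piece is a border strip matroid. First I would translate the hypothesis ``connected but not a border strip matroid'' into the combinatorial condition required by Lemma~\ref{transdecom}. A connected lattice path matroid $\Mo[P,Q]$ fails to be a border strip matroid exactly when the region enclosed by $P$ and $Q$ contains some $2\times 2$ block of unit cells. Writing the presentation as $N_i=[s_i,t_i]$ with $s_i, t_i$ the positions of the $i$th North steps of $Q$ and $P$, the existence of such a $2\times 2$ block at ``height'' $j$ and ``column'' $x$ is equivalent to the existence of integers $j$ and $x$ with $s_j<x<t_j$ and $s_{j+1}<x+1<t_{j+1}$. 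This is exactly the hypothesis of Lemma~\ref{transdecom}, so we obtain a hyperplane split $\Po(\Mo[P,Q])=\Po(\Mo[P,Q_1])\cup \Po(\Mo[P_1,Q])$ into two lattice path matroid polytopes whose regions are each strictly smaller in area than the original.

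Next I would iterate. If either of the two pieces is still not a border strip matroid, re-apply the step above. If at any point a piece becomes disconnected, Proposition~\ref{dsum} ensures that each of its connected components is again a lattice path matroid polytope, so we may decompose each component separately. Since the area of the region strictly decreases at each split, the process must terminate after finitely many steps; at termination every piece has no $2\times 2$ block in its region and hence is, by definition, a border strip matroid $S(p)$ for some box-path $p$ tracing the strip. This gives a decomposition of $\Po(\Mo[P,Q])$ into polytopes $\Po(S(p))$ meeting along common faces, producing the desired decomposition.

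The main obstacle is the last claim, that the border strips appearing in the final decomposition are indexed precisely by the paths $p$ contained in $\Mo[P,Q]$. I would argue by induction on the area of the region: at each split using Lemma~\ref{transdecom}, the lattice paths (i.e.\ bases) of $\Mo[P,Q]$ are partitioned according to whether they pass above or below the distinguished cell at $(x,j)$, and the induced partition agrees with the sorting of bases between the two sub-polytopes $\Po(\Mo[P,Q_1])$ and $\Po(\Mo[P_1,Q])$. Carrying this bookkeeping through the recursion, each terminal border strip collects exactly those bases that share a common ``above/below'' choice at every internal $2\times 2$ block encountered, and these choices biject with the lattice paths in the original region $[P,Q]$. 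The verification that the $\Po(S(p))$ have the correct overlap structure (pairwise intersections are common faces) reduces to the corresponding statement at each hyperplane split, which is already guaranteed by Lemma~\ref{transdecom}.
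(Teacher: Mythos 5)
Your proposal is correct and follows essentially the same route as the paper: induct by repeatedly applying the hyperplane split of Lemma~\ref{transdecom} until every piece is a border strip, and observe that the paths of $[P,Q]$ are covered by those of the two subregions $[P,Q_1]$ and $[P_1,Q]$, so the terminal pieces are exactly the $\Po(S(p))$ for $p$ in $[P,Q]$. You merely make explicit some details the paper leaves implicit (termination via decreasing area, the equivalence of ``not a border strip'' with the $2\times 2$-block condition, and the bookkeeping of which bases land in which piece).
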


\begin{proof}
By induction we know that $\Po(\Mo[P_1,Q])$,$\Po(\Mo[P,Q_1])$ can be decomposed to
border strip matroid polytopes $\Po(S(p))$ for all 

$p\in [P_1,Q]$ and $\Po(S(p))$ for all $p\in [P_1,Q]$ , respectively.

Since all the paths contain in region $[P,Q]$ is either in their
region $[P,Q_1]$ or in $[P_1, Q]$. Therefore $\Po(\Mo[P,Q])$ can
decompose to border strip matroid polytopes $\Po(S(p))$ for
all $p\in \Mo[P,Q]$
\end{proof}

\section{Triangulation and Ehrhart Series of Catalan Matroid Polytope}

The {\it hypersimplex\/} $\H_{k,n}\subset\R^n$ is the convex
polytope defined as the convex hull of the points $\epsilon_I$, for
$I\in\binom{[n]}{k}$. All these $\binom{n}{k}$ points are actually
vertices of the hypersimplex because they are obtained from each
other by permutations of the coordinates. This $(n-1)$-dimensional
polytope can also be defined as
$$\H_{k,n} = \left\{(x_1,\ldots, x_n)\mid 0\leq x_1,\ldots, x_n\leq 1;\
x_1+\cdots + x_{n} =k \right\}.$$
\

The following unimodular triangulation of hypersimplex introduced by Sturmfels.

\
\subsection{Another Combinatorial Interpretation  of  the Volume of the Lattice Path Matroid Polytopes}

We consider the integers $0< k < n$.  We set $[n]:=\{1,\dots,n\}$,
$\binom{[n]}{k}$
denotes the collection of $k$-element subsets of $[n]$.

Clearly for each $k$-subset $I \in \binom{[n]}{k}$
we  cab associate the $0, 1$-vector $e_I =
(e_1,\dots,e_n)$ such that $e_i=1$ for $i\in I$;
and $\epsilon_i = 0$ for $i\not\in I$.

The {\it hypersimplex\/} $\H_{k,n}\subset\R^n$ is a convex polytope
defined as the convex hull of the points $\epsilon_I$,
for $I\in\binom{[n]}{k}$.
All these $\binom{n}{k}$ points are actually vertices
of the hypersimplex because they are obtained from each other
by permutations of the coordinates.
This $(n-1)$-dimensional polytope can also be defined as
$$
\H_{k,n} = \{(x_1,\dots,x_n)\mid 0\leq x_1,\dots,x_n\leq 1;\
x_1+\cdots + x_{n} =k\}.
$$
The hypersimplex is linearly equivalent to the
polytope $\tilde\Delta_{k,n}\subset\R^{n-1}$ given by
\[
\tilde\Delta_{k,n}=\{(x_1,\dots,x_{n-1})\mid 0\leq x_1,\dots,x_{n-1}\leq 1;\
k-1 \leq x_1 + \cdots + x_{n-1} \leq k\}.
\]
Indeed, the projection $p:(x_1,\dots,x_n)\mapsto (x_1,\dots,x_{n-1})$
sends $\Delta_{k,n}$ to $\tilde\Delta_{k,n}$.
The hypersimplex $\tilde\Delta_{k,n}$ can be thought of as
the region (slice) of the unit hypercube $[0,1]^{n-1}$
contained between the two hyperplanes $\sum x_i = k-1$ and $\sum x_i = k$.

Recall that a {\it descent\/} in a permutation $w\in S_n$ is an
index $i\in\{1,\dots,n-1\}$ such that $w(i)>w(i+1)$. Let $\des(w)$
denote the number of descents in $w$. The {\it Eulerian number\/}
$A_{k,n}$ is the number of permutations in $S_n$ with $\des(w)=k-1$.

Let us normalize the volume form in $\R^{n-1}$ so that the volume of
a unit simplex is 1 and, thus, the volume of a unit hypercube is
$(n-1)!$. It is a classical result, implicit in the work of 
Laplace
that the normalized volume of the hypersimplex $\Delta_{k,n}$ equals
the Eulerian number $A_{k,n-1}$. One would like to present a
triangulation of $\Delta_{k,n}$ into $A_{k,n-1}$ unit simplices.
Such a triangulation into unit simplices is called a {\it unimodular
triangulation}.

In this section we discuss Stanley's triangulation of hypersimplex as follows:


\subsection{Stanley's triangulation} \label{sec:Sta}

The hypercube $[0,1]^{n-1}\subset\R^{n-1}$ can be triangulated into
$(n-1)$-dimensional unit simplices $\nabla_w$ labelled by
permutations $w \in S_{n-1}$ given by
\[
\nabla_w = \set{(y_1,\ldots,y_{n-1}) \in [0,1]^{n-1} \mid 0 <
y_{w(1)} < y_{w(2)} < \cdots < y_{w(n-1)} < 1}.
\]

Stanley~\cite{Sta} defined a transformation of the hypercube
$\psi:[0,1]^{n-1} \rightarrow [0,1]^{n-1}$ by
$\psi(x_1,\ldots,x_{n-1}) = (y_1,\ldots,y_{n-1})$, where
\[
y_i = (x_1 + x_2 + \cdots + x_i) - \lfloor x_1 + x_2 + \cdots +
x_i \rfloor.
\]
The notation $\lfloor x \rfloor$ denotes the integer part of $x$.
The map $\psi$ is piecewise-linear, bijective on the hypercube
(except for a subset of measure zero), and volume preserving.

Since the inverse map $\psi^{-1}$ is linear and injective when
restricted to the open simplices $\nabla_w$, it transforms the
triangulation of the hypercube given by $\nabla_w$'s into another
triangulation.

\begin{theorem}[Stanley~\cite{Sta}]
The collection of simplices $\psi^{-1}(\nabla_w)$, $w\in S_{n-1}$,
gives a triangulation of the hypercube $[0,1]^{n-1}$ compatible
with the subdivision of the hypercube into hypersimplices.
The collection of the simplices $\psi^{-1}(\nabla_w)$,
where $w^{-1}$ varies over permutations in $S_{n-1}$
with $k-1$ descents, gives a triangulation of the $k$-th
hypersimplex $\tilde \Delta_{k,n}$.
Thus the normalized volume of $\tilde\Delta_{k,n}$
equals to the Eulerian number $A_{k,n-1}$.
\label{thm:Sta}
\end{theorem}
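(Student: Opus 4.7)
The plan is to verify all three claims by working directly with the piecewise-linear map $\psi$, cell by cell, on the hypercube. The first step is to show that $\psi$ is an (a.e.)\ volume-preserving bijection $[0,1]^{n-1} \to [0,1]^{n-1}$: on any \emph{floor cell} where the integers $a_i := \lfloor s_i \rfloor$ (with $s_i = x_1 + \cdots + x_i$) are constant, $\psi$ is affine and its Jacobian is lower triangular with $1$'s on the diagonal, so $\psi$ preserves Lebesgue measure on each cell. To invert, I would use $x_i = (a_i - a_{i-1}) + (y_i - y_{i-1})$ with the convention $a_0 = y_0 = 0$, and check by a short case analysis that for a generic target $(y_1,\ldots,y_{n-1}) \in (0,1)^{n-1}$ there is exactly one integer tuple $(a_1,\ldots,a_{n-1})$ for which every $x_i$ lies in $[0,1]$.

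Next, since each open simplex $\nabla_w$ sits in the open hypercube where $\psi$ is bijective and locally affine, the preimage $\psi^{-1}(\nabla_w)$ is an open unimodular simplex of volume $1/(n-1)!$. Tracking the piecewise-affine structure across floor-cell boundaries shows these preimages cover the hypercube and that their closures meet only along common faces, giving the triangulation. The decisive step is to identify which hypersimplex $\tilde\Delta_{k,n}$ contains $\psi^{-1}(\nabla_w)$. Substituting into $x_i = (a_i - a_{i-1}) + (y_i - y_{i-1})$ and imposing $x_i \in [0,1]$ forces $a_i = a_{i-1}$ when $y_{i-1} < y_i$ and $a_i = a_{i-1} + 1$ when $y_{i-1} > y_i$, so $a_{n-1}$ equals the number of descents of the sequence $(y_1,\ldots,y_{n-1})$. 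For $y \in \nabla_w$ this count is exactly $\des(w^{-1})$, because $y_{i-1} > y_i \Longleftrightarrow w^{-1}(i-1) > w^{-1}(i)$. Since $x_1 + \cdots + x_{n-1} = a_{n-1} + y_{n-1} \in [a_{n-1}, a_{n-1}+1)$, one concludes $\psi^{-1}(\nabla_w) \subset \tilde\Delta_{k,n}$ iff $\des(w^{-1}) = k-1$, establishing compatibility with the hypersimplex subdivision. Counting the unit simplices in $\tilde\Delta_{k,n}$ yields the normalized volume $A_{k,n-1}$.

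The main obstacle I expect is not any single inequality but the careful bookkeeping on the measure-zero boundary where consecutive $y_i$ coincide or $y_i \in \{0,1\}$: to upgrade a "tiling up to measure zero" into a genuine face-to-face triangulation, one must verify that distinct $\psi^{-1}(\nabla_w)$'s meet exactly along common faces, which requires tracking how the affine pieces of $\psi$ glue across floor-cell boundaries.
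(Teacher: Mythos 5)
Your proposal is correct and fleshes out exactly the argument the paper sketches around this (cited) theorem: the paper itself gives no proof, but its surrounding discussion — that $\psi$ is piecewise-linear, volume-preserving, and bijective off a measure-zero set, and that $\psi^{-1}$ is affine and unimodular on each $\nabla_w$ — is precisely the skeleton you fill in, and the case formula $x_{i+1}=y_{i+1}-y_i$ or $y_{i+1}-y_i+1$ according to whether $w^{-1}(i+1)>w^{-1}(i)$, which the paper invokes later in the proof of Lemma~\ref{border}, is the same inversion you derive from $x_i=(a_i-a_{i-1})+(y_i-y_{i-1})$. Your identification $a_{n-1}=\des(w^{-1})$ and hence $\sum x_i\in(k-1,k)$ iff $\des(w^{-1})=k-1$ is the decisive step and is carried out correctly; the face-to-face verification you flag as remaining is indeed the only technical point left, and it is also the one the paper omits entirely.
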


\begin{definition} The standard Young tableau of the shape  $\lambda$, where  $|\lambda|=n$ is a filling of $\Lambda$ with the numbers
$1, \cdots, n$ which is increasing in the rows. and decreasing in the columns.

\end{definition}

We know the following facts:

\begin{remark}

\begin{itemize}

\item  The number of Standard Young tableaux of the shape $\lambda$ is $f_{\lambda}$, which can calculated by hook length formula.~\cite{}
\item The number of border strip Young Tableaux's of the shape $S(p)$ is denoted by $f_{S(p)}$. It is not hard to see that $f_{S(p)}$ is exactly the number of permutations of $1,\cdots,n$ which has descents when the step from $i$ to $i+1$ is a horizontal step. 
\end{itemize}
\end{remark}

Using Stanley's triangulation, we show another combinatorial interpretation of the volume of polytope.

\begin{lemma}\label{border}

The volume of the border strip matroid polytope $\Po(S(p))$ is  $f_{S(p)}$ which is the number of standard young tableaus of  the shape $S(p)$.
\end{lemma}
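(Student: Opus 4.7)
The plan is to invoke Stanley's unimodular triangulation of the hypersimplex $\tilde\Delta_{r,m+r}$ containing $\Po(S(p))$ and then identify exactly which of its $A_{r,m+r-1}$ unit simplices $\psi^{-1}(\nabla_w)$ fit inside the border strip matroid polytope. Since each such simplex contributes exactly one unit of normalized volume, $\mathrm{Vol}(\Po(S(p)))$ will equal the number of permutations $w \in S_{m+r-1}$ admitted by this inclusion test. The goal is to show that these admissible $w$ are exactly those whose inverse has descent set equal to the horizontal-step positions of the path $p$, which by the remark preceding the lemma is in bijection with standard border-strip Young tableaux of shape $S(p)$ and hence counted by $f_{S(p)}$.

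First I would use Lemma~\ref{hyperplane} to present $\Po(S(p))$ explicitly as the intersection of the hypersimplex with the path inequalities
\[
p_i \leq x_1 + \cdots + x_i \leq q_i, \qquad 1 \leq i \leq m+r-1,
\]
where $p_i,q_i$ count the $N$-steps of the bounding paths $P,Q$ in their first $i$ positions. Because $S(p)$ is a border strip the gap $q_i - p_i$ is at most $1$ at every index $i$; this tightness is the geometric manifestation of the no-$2\times 2$-square condition and is exactly what will force the descent structure in Stanley's coordinates.

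Next I would unpack Stanley's transformation $\psi(x)=y$ with $y_i = (x_1+\cdots+x_i) - \lfloor x_1+\cdots+x_i \rfloor$. On $\psi^{-1}(\nabla_w)$ one has $0 < y_{w(1)} < \cdots < y_{w(m+r-1)} < 1$, and a direct computation shows that the integer parts $\lfloor x_1+\cdots+x_i \rfloor$ are locally constant on this simplex and increment by exactly one precisely at the descent positions of $w^{-1}$. Translating the path inequalities under this description forces $\lfloor x_1+\cdots+x_i \rfloor$ to equal $p_i$ (equivalently $q_i$ whenever the gap is zero) for every $i$, which is compatible with the behaviour on $\psi^{-1}(\nabla_w)$ exactly when the descent set of $w^{-1}$ coincides with the set of horizontal-step positions of the path $p$ traversing the border strip.

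Finally, I would invoke the combinatorial identity in the remark: permutations of $[m+r-1]$ whose descent set equals the horizontal-step positions of $p$ are in bijection with standard border-strip tableaux of shape $S(p)$, and so number $f_{S(p)}$. Since each $\psi^{-1}(\nabla_w)$ contributes one unit of normalized volume, summing over admissible $w$ gives $\mathrm{Vol}(\Po(S(p))) = f_{S(p)}$. The main obstacle will be the bookkeeping in the middle step: carefully verifying that on each $\nabla_w$ the integer parts of the partial sums behave as claimed, and that the border-strip condition matches the descent set of $w^{-1}$ to the horizontal-step positions of $p$ without admitting spurious simplices or excluding legitimate ones. I expect this to be tractable by an induction on $i$ tracking how each partial sum crosses integer thresholds as $y$ varies in $\nabla_w$.
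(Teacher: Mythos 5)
Your proposal follows essentially the same route as the paper's own proof: both invoke Stanley's triangulation, use the hyperplane description of $\Po(S(p))$ from Lemma~\ref{hyperplane} together with the border-strip condition $q_i - p_i \leq 1$, identify the admissible simplices $\psi^{-1}(\nabla_w)$ as those for which the descent set of $w^{-1}$ matches the horizontal steps of $p$, and conclude via the bijection with standard border-strip tableaux. The only difference is that you flag the integer-part bookkeeping as a step to be verified by induction, whereas the paper asserts it directly from the explicit piecewise-linear formula for $\psi^{-1}$ on $\nabla_w$.
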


\begin{proof}
Consider a border strip shape Young tableaux of $\lambda,$ where $|\lambda|=n.$ As we discussed before, the standard young tableaux's of the shape $\lambda=S(p)$ is in bijection with permutations of size $n$ which have  a descent on $i$th positions, when there is a box $i+1$ above box $i$ on the border strip young tableaux of the shape $\lambda=S(p).$

Considering Stanley's triangulation, 
recall that this  triangulation
occurs in the space $\R^{n-1}$. To be more precise, in order to
obtain Stanley's triangulation we need to apply the projection
$p:(x_1,\ldots,x_n) \mapsto (x_1,\ldots,x_{n-1}).$  Let us identify a permutation $w=w_1\cdots
w_{n-1}\in S_{n-1}$ with $k-1$ descents with the permutation
$w_1\cdots w_{n-1} n\in S_n$.

Recall the map
$\psi^{-1}:(y_1,\dots,y_{n-1})\mapsto(x_1,\dots,x_{n-1})$
restricted to the simplex $\nabla_w = \set{0 < y_{w(1)} < \cdots <
y_{w(n-1)} <1}$ is given by $x_1 = y_1$ and
$$
x_{i+1} = \left\{
\begin{array}{cl}
y_{i+1}-y_i & \text{if } w^{-1}(i+1)>w^{-1}(i),\\
y_{i+1}-y_i+1 & \text{if }w^{-1}(i+1)<w^{-1}(i)
\end{array}
\right.
$$

So the image of the map $\psi^{-1}$ for $\nabla_w$ lies in the polytope  described by the following hyperplanes
$des(w^{-1})_i \leq x_1+\cdots+x_i  \leq des(w^{-1})_i+1.$ Applying Lemma ~\ref{hyperplane} each border strip tableaux of shape
 $\lambda$ can be described by the hyperplanes $des(w^{-1})_i \leq x_1+\cdots+x_i \leq des(w^{-1})_i+1.$ It is not hard to see that  all the simplexes $\nabla_w $, where $w^{-1}$ have  the same descent sets are map to their associated border strip matroid polytope $P_{S(p)}$, where the horizontal steps of  $p$ are the same as  the descent sets of $w^{-1}$ . This map is injective on the interior of simplexes and  the interior of the border strip matroid polytope is covered by them.

Therefore, the volume of $\Po(S(p))$ is the number of permutations which has descent on the step $i$ if and only if the box $i+1$ is above the box $i.$ This number is equal to the number of Standard young tableaux's of
the shape $S(p).$
\end{proof}

\begin{theorem}
Let $\Po(\Mo[P,Q])$ be a  connected lattice path matroid of rank $r$ on
$\{1,\ldots,m+ r\}$  which is not a border strip matroid. The volume of $\Po(\Mo[P,Q])$  is sum over 
$f_{(S(p))}$ where $p$ is range over all paths contained in
$\Mo[P,Q]$  and $f_{(S(p))}$ is the number of Standard young tableaux of shape $S(p)$ .

\end{theorem}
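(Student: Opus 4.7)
The plan is to combine the geometric decomposition furnished by Lemma~\ref{dec} with the volume formula for the border strip pieces supplied by Lemma~\ref{border}. First, I would invoke Lemma~\ref{dec} to write
\[
\Po(\Mo[P,Q]) \;=\; \bigcup_{p \in [P,Q]} \Po(S(p)),
\]
where $p$ ranges over all lattice paths from $(0,0)$ to $(m,r)$ contained in the region bounded by $P$ and $Q$, and $S(p)$ denotes the border strip matroid associated to the sequence of boxes adjacent to $p$.

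Next, I would check that this decomposition is additive on volumes. By the definition of matroid polytope decomposition recalled at the start of Section~4, any two pieces $\Po(S(p))$ and $\Po(S(p'))$ with $p \neq p'$ meet in a common face of both, so their intersection has dimension strictly smaller than $\dim \Po(\Mo[P,Q])$. Hence each pairwise intersection has Lebesgue measure zero inside the ambient affine hull $x_1 + \cdots + x_{m+r} = r$, and volume adds:
\[
\vol \Po(\Mo[P,Q]) \;=\; \sum_{p \in [P,Q]} \vol \Po(S(p)).
\]
Applying Lemma~\ref{border} to each summand gives $\vol \Po(S(p)) = f_{S(p)}$, and substituting yields the claimed identity $\vol \Po(\Mo[P,Q]) = \sum_{p \in [P,Q]} f_{S(p)}$.

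The step I expect to be the main obstacle is verifying that the iterated hyperplane splits of Lemma~\ref{transdecom} really do terminate exactly at the border strip matroids, and that every path $p \in [P,Q]$ appears as the index of exactly one full-dimensional piece in the resulting subdivision. This bookkeeping is however already carried out inside Lemma~\ref{dec}, so at the level of the present theorem one only needs to combine the two earlier lemmas and sum term by term; no additional combinatorial or geometric argument should be required.
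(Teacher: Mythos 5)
Your proposal is correct and follows essentially the same route as the paper: it decomposes $\Po(\Mo[P,Q])$ into the border strip pieces $\Po(S(p))$ via Lemma~\ref{dec} and evaluates each piece's volume as $f_{S(p)}$ via Lemma~\ref{border}. The only difference is that you spell out the additivity of volume across the lower-dimensional common-face intersections, a step the paper leaves implicit.
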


\begin{proof}

As we know by Lemma~\ref{dec},  $\Po(\Mo[P,Q])$  can be
decomposed into the connected lattice path matroid polytope using
hyperplane splits. Moreover, $\Po(\Mo[P,Q])$ can be decomposed into $\Po(S(p))$ where $p$ is range over all paths contained in
$[P,Q]$. By Lemma~\ref{border}, the volume of $\Po(S(p))$ is the number of Standard young tableaux's  of  shape $S(p).$ 
\end{proof}


\
\

\subsection{ Formula for Ehrhart Polynomial and Volume of Lattice Path Matroid polytopes}

\
Consider the lattice path matroid polytope $\Po({\Mo[P,Q]})$ where $P$ and $Q$ are the paths from $(0,0)$ to $(m,r)$. Let $p_i$ and $q_i$ be the number of $N$ steps occur in the first $i$ steps of paths $P$ and $Q$, respectively, where $1 \leq i \leq m+r$, clearly, $p_{m+r}=q_{m+r}=r$. We know that  $\Po(X)$ lies in the region $[P,Q]$ if and only if $p_i \leq x_1+\cdots+x_i \leq q_i$ for all $1\leq i \leq m+r$. Therefore,     
the polytope $\Po({\Mo[P, Q]})$    can be determined by the following inequalities:

\begin{enumerate}
\item $p_i \leq x_1+\cdots+x_i \leq q_i$ for all $1\leq i \leq m+r,$
where $x_1+\cdots+x_{m+r}=r,$
\item $0\leq x_i \leq 1.$
\end{enumerate}

Let us denote $x_1, x_1+x_2, \ldots, x_1+\cdots+x_{m+r}$ by $c_1, \ldots, c_{m+r}$ so  it is an increasing sequence where $p_i\leq c_i \leq q_i$ and $c_{m+r}=r$.

Consider $a_1,\ldots, a_{r-1}$ and $b_1, \ldots, b_{r-1}$ so that $a_k+1=\min\{ i,  p_i\geq k+1 \}$
and $b_k+1= \min \{i,  q_i\geq k+1\}.$ 
We define the set of arrays of  positive integers $\Gamma(P,Q)$ as follows: 

\begin{enumerate}
\item ${\alpha}_1+\cdots+{\alpha}_{r}=m+r,$  

\item  $a_i \leq {\alpha}_1+\cdots+{\alpha}_i \leq b_i$ for  $i\leq r-1,$ 
\item ${\alpha}_i\geq 1.$
\end{enumerate}

Consider the point $\textbf{x}=(x_1,\ldots, x_{m+r})$ and 
$c_i= x_1+\cdots +x_i$  for $i=1,\cdots, m+1.$
It is easy to verify that the integer point $\textbf{x}=(x_1,\ldots,x_{m+r})$ is in $\Po({\Mo[P,Q]})$ if and only if 
for some $\alpha= ({\alpha}_1, \ldots, {\alpha}_r) \in   \Gamma(P,Q)$ we have:

 \begin{eqnarray}
  \notag
0 \leq c_1 \leq \cdots \leq c_{{\alpha}_1} \leq 1 < c_{{\alpha}_1 +1} \leq \cdots \leq c_{{{\alpha}_1}+{{\alpha}_2}}
\leq 2
\\ <\cdots \leq c_{{{\alpha}_1}+\cdots+ {{\alpha}_{r-1}}} \leq{r-1}< \cdots \leq c_{{{\alpha}_1}+\cdots+ {{\alpha}_{r}}}=r.
\end{eqnarray} 
and $c_{{\alpha}_1}\geq 1.$

We conclude the following theorem.

\begin{theorem}
 The number of lattice points in $\Po(\Mo[P,Q])$ is
$|\Gamma(P,Q)|.$
\end{theorem}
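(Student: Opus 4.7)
The plan is to establish an explicit bijection between the lattice points of $\Po(\Mo[P,Q])$ and the arrays in $\Gamma(P,Q)$, which immediately yields the claimed cardinality. The bijection is the one set up in the discussion immediately preceding the theorem statement.

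First, I would identify the lattice points: since $\Po(\Mo[P,Q]) \subseteq [0,1]^{m+r}$, any integer point has $x_i \in \{0,1\}$, and by Lemma~\ref{hyperplane} such a vector lies in the polytope iff $\sum_i x_i = r$ and $p_i \leq c_i \leq q_i$ for all $i$, where $c_i = x_1 + \cdots + x_i$. By Theorem~\ref{bases} these are precisely the incidence vectors of the bases of $\Mo[P,Q]$, i.e., the lattice paths from $(0,0)$ to $(m,r)$ staying in the region $[P,Q]$.

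Next, I would construct the map $\mathbf{x} \mapsto \alpha$. The sequence $c_0 = 0 \leq c_1 \leq \cdots \leq c_{m+r} = r$ proceeds by unit (or zero) jumps, partitioning the indices $\{1,\ldots,m+r\}$ into consecutive blocks as in the display preceding the theorem, with block $k$ ending at the last index where $c_i = k$. Setting $\alpha_k$ equal to the length of the $k$-th block gives an array with $\sum_k \alpha_k = m+r$, with $\alpha_k \geq 1$ because $c$ attains every intermediate value, and with cumulative sums $\alpha_1 + \cdots + \alpha_k$ equal to the right endpoint of the $k$-th plateau, which is exactly one less than the position of the $(k+1)$-th North step of the underlying path. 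The bounds $a_k \leq \alpha_1 + \cdots + \alpha_k \leq b_k$ required by $\Gamma(P,Q)$ then translate directly from the polytope inequalities $p_i \leq c_i \leq q_i$ through the identities $a_k + 1 = \min\{i : p_i \geq k+1\}$ and $b_k + 1 = \min\{i : q_i \geq k+1\}$, which pinpoint the positions of the $(k+1)$-th North steps of $P$ and of $Q$.

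For the inverse direction, an array $\alpha \in \Gamma(P,Q)$ reconstructs $\mathbf{x}$ from the block structure, and the $\Gamma$-inequalities guarantee the resulting path stays inside $[P,Q]$. The main step to verify carefully will be the equivalence of the two inequality systems — partial sums of $\mathbf{x}$ versus cumulative block lengths of $c$ — together with the injectivity of the map (since the first block may mix $c_i = 0$ and $c_i = 1$ entries, one must confirm that the $c$-sequence is fully determined by $\alpha$). Once these details are handled, the equality $|\Po(\Mo[P,Q]) \cap \Z^{m+r}| = |\Gamma(P,Q)|$ follows immediately.
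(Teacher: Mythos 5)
Your proposal follows the same route the paper itself takes: identify the integer points of $\Po(\Mo[P,Q])$ with the $0/1$ incidence vectors of the lattice paths in the region $[P,Q]$, and then try to match these with the arrays $\alpha\in\Gamma(P,Q)$ by reading off the block structure of the partial-sum sequence $c_i=x_1+\cdots+x_i$. (The paper offers essentially no argument beyond the displayed chain of inequalities and the phrase ``we conclude,'' so there is no further proof to compare against.) The difficulty sits exactly at the point you defer to the end: the $c$-sequence is \emph{not} determined by $\alpha$. The array $\alpha$ records only the partial sums $\alpha_1+\cdots+\alpha_k$ for $k=1,\dots,r-1$, i.e.\ the positions of the second through $r$-th North steps of the path (each such sum is one less than the position of the $(k+1)$-th North step), while the position of the \emph{first} North step --- the index inside the first block where $c$ jumps from $0$ to $1$ --- is forgotten. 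The map from lattice points to $\Gamma(P,Q)$ is therefore many-to-one in general, and the asserted equality of cardinalities fails. Already for $m=r=1$, $P=EN$, $Q=NE$, the polytope is the segment from $(1,0)$ to $(0,1)$ with two lattice points, while $\Gamma(P,Q)$ consists of the single array $(\alpha_1)=(2)$. For $P=E^2N^2$, $Q=(NE)^2$ there are five lattice points but at most two admissible arrays.

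What the block decomposition actually yields is a weighted count: each $\alpha\in\Gamma(P,Q)$ contributes the number of admissible positions for the first North step inside its first block (admissible with respect to the lower constraints $p_i\le c_i$). This is consistent with the asymmetric first factor $\left(\binom{t+1-s_1}{\alpha_1}\right)$, as opposed to $\left(\binom{t-s_{2i-1}-s_{2i-2}}{\alpha_i}\right)$ for $i>1$, in the Ehrhart formula that follows the theorem: the first block carries an extra degree of freedom that a plain count of arrays ignores. A second, smaller defect you inherit from the paper's definition: since $P$ lies weakly below $Q$, one has $a_k\ge b_k$, so the condition defining $\Gamma(P,Q)$ should read $b_k\le\alpha_1+\cdots+\alpha_k\le a_k$; as written it is empty or forces equality. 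Both repairs are routine, but as it stands the bijection at the heart of your argument does not exist, so the proof (like the paper's) is not complete and the statement itself needs correction.
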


We define the set $S_{r}(t)$ be the set of arrays $(s_1, s_2, \ldots , s_{2r-2})$ so that $s_i + s_{i+1} \leq t$ for all $i=1,\cdots, 2r-3$ as well as $s_1\leq t$ and $s_{2r-2}\leq t.$
Considering the above observations, the  integer points in $t \Po(\Mo[P,Q])$ are in bijection with the following set of sequences.

For any $\alpha \in \Gamma(P,Q)$
\begin{enumerate}\label{c12}

 \item $0\leq c_1\leq c_2 \leq \cdots \leq c_{{\alpha}_1}=t-s_1$, where $c_i -c_{i-1}\leq t$. The number of such sequences is $\left(\binom{t+1-s_1}{{\alpha}_1}\right).$
\item  For $1<i <r$, we have:
\\
 $(i-1)t \leq (i-1)t+ s_{2i-2}= c_{{\alpha}_1+\cdots+{\alpha}_{i-1}+1}\leq \cdots \leq c_{{\alpha}_1+\cdots+{\alpha}_i}= (i)t-s_{2i-1} \leq (i)t$. The number of such sequences are
$\left(\binom{t-s_{2i-1}-s_{2i-2}}{{\alpha}_{i}}\right).$
\item  For $i=r$,  $(r-1)t \leq (r-1)t+ s_{2r-2}= c_{{\alpha}_1+\cdots+{\alpha}_{r-1}+1}\leq \cdots \leq c_{{\alpha}_1+\cdots+{\alpha}_r}=(r)t$. The number of such sequences are
$\left(\binom{t-s_{2r-2}}{{\alpha}_{r}}\right).$
\end{enumerate}

Recall that we define $S_{r}(t)$ as follows:

\begin{eqnarray}
\notag\
S_{r}(t)=\left\{ s=(s_1,\ldots,s_{2(r-1)}) \textnormal{ so that } 
s_1\leq t,  s_1+s_2\leq t, \right.
\\ \left. \ldots, s_{2(r-1)-1}+s_{2(r-1)}\leq t, s_{2(r-1)}\leq t \right\}.
\end{eqnarray}

Observing the above facts, we conclude that  Ehrhart polynomial of the  Lattice Path Matroid Polytope can be computed as follows:

\begin{theorem}
\begin{equation}
\sum_{\alpha \in \Gamma(P,Q)} \sum_{s \in{ S_{r}(t)}}{\left(\binom{t+1-s_1}{{\alpha}_1}\right)}{\left(\binom{t-s_2-s_3}{{\alpha}_2}\right)}\cdots {\left(\binom{t-{s_{2r-2}}}{{\alpha}_r} \right)}.
\end{equation}
\end{theorem}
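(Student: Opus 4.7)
The plan is to make rigorous the informal argument sketched in the paragraphs preceding the theorem. By definition, the Ehrhart polynomial $L_{\Po(\Mo[P,Q])}(t)$ counts integer points in the dilate $t\,\Po(\Mo[P,Q])$. Using the facet description from Lemma~\ref{hyperplane}, these are exactly the integer vectors $(x_1,\ldots,x_{m+r}) \in \Z^{m+r}$ satisfying $0 \le x_i \le t$, $x_1+\cdots+x_{m+r} = tr$, and $tp_i \le x_1+\cdots+x_i \le tq_i$ for every $i$. The substitution $c_i := x_1+\cdots+x_i$ converts the problem into counting weakly increasing integer sequences $0 = c_0 \le c_1 \le \cdots \le c_{m+r} = tr$ whose consecutive differences are at most $t$ and which satisfy $tp_i \le c_i \le tq_i$.

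First, I would stratify these $c$-sequences by a composition $\alpha = (\alpha_1,\ldots,\alpha_r)$ of $m+r$ into $r$ positive parts, where $\alpha_k$ records how many of the $c_i$'s lie in the integer slab $((k-1)t, kt]$. The constraints $tp_i \le c_i \le tq_i$, combined with the definitions $a_k+1 = \min\{i : p_i \ge k+1\}$ and $b_k+1 = \min\{i : q_i \ge k+1\}$, are straightforwardly equivalent to $a_k \le \alpha_1 + \cdots + \alpha_k \le b_k$ for every $1 \le k \le r-1$. Consequently the set of admissible $\alpha$'s is exactly $\Gamma(P,Q)$.

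Second, for a fixed $\alpha \in \Gamma(P,Q)$ I would parametrize the remaining freedom by boundary slacks $s = (s_1,\ldots,s_{2r-2})$ as in the pre-theorem discussion, setting $c_{\alpha_1+\cdots+\alpha_i} = it - s_{2i-1}$ for $1 \le i \le r-1$ and $c_{\alpha_1+\cdots+\alpha_{i-1}+1} = (i-1)t + s_{2i-2}$ for $2 \le i \le r$. The step inequality $c_{j+1} - c_j \le t$ applied across the $i$-th slab boundary becomes $s_{2i-1} + s_{2i} \le t$, while the conditions on the first and last slab give $s_1 \le t$ and $s_{2r-2} \le t$. This identifies the admissible slack vectors with the set $S_r(t)$. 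For each pair $(\alpha,s)$, the remaining choice is a weakly increasing integer sequence of length $\alpha_i$ inside each slab with the prescribed endpoints; a standard multichoose count produces the factor $\left(\binom{t-s_{2i-1}-s_{2i-2}}{\alpha_i}\right)$ for interior slabs, and the asymmetric boundary factors $\left(\binom{t+1-s_1}{\alpha_1}\right)$ and $\left(\binom{t-s_{2r-2}}{\alpha_r}\right)$ for the first and last slab, where the asymmetry reflects that $c_0=0$ and $c_{m+r}=tr$ are hard-wired rather than parametrized by slacks. Taking the product across slabs and summing over $(\alpha, s) \in \Gamma(P,Q) \times S_r(t)$ produces the claimed formula.

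The main obstacle is verifying that the assignment $c\text{-sequence} \mapsto (\alpha, s, \text{slab-sequences})$ is a genuine bijection onto $\Gamma(P,Q) \times S_r(t)$ together with the multichoose data. I must check that every lattice point determines its $(\alpha, s)$ data uniquely, that the inequalities defining $\Gamma(P,Q)$ and $S_r(t)$ are exactly the constraints induced by the original polytope inequalities (with particular care at the corner cases where $p_i$ or $q_i$ jumps by more than one or at the terminal indices), and that the boundary multichoose counts for the first and last slab really match the asymmetric factors in the stated formula. Once this bookkeeping is in place, the formula follows by summing over the bijection.
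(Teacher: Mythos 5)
Your proposal follows the same route as the paper: the paper's own justification is exactly the pre-theorem discussion you are formalizing --- stratify the partial sums $c_i=x_1+\cdots+x_i$ of a lattice point of $t\,\Po(\Mo[P,Q])$ by the slab composition $\alpha\in\Gamma(P,Q)$ and the boundary slacks $s\in S_{r}(t)$, then count each slab by a multichoose. The bijectivity bookkeeping you flag as the remaining obstacle (uniqueness of $(\alpha,s)$ when some $c_i$ lands exactly on a multiple of $t$, and the exact match between the endpoint conventions and the asymmetric first/last factors) is likewise left unverified in the paper, so your attempt matches the paper's argument in both substance and level of rigor.
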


\section*{Acknowledgements}

I would like to thank Richard Stanley for suggesting the problem and
helpful discussions. I would like to also thank Federico Adrila, Matthias Beck and  Alex Postnikov, Richard Ehrenborg and Seth Sullivant
for helpful discussions. Lemma~\ref{border} is a joint work with Seth Sullivant.
\bigskip

\end{document}